\theoremstyle{definition}
\newtheorem{thm}{Theorem}[section]
\newtheorem{cor}[thm]{Corollary}
\newtheorem{prop}[thm]{Proposition}
\newtheorem{dfn}[thm]{Definition}
\newtheorem{rmk}[thm]{Remark}
\theoremstyle{definition}
\newtheorem{ex}[thm]{Example}
\def\id{\mathrm{id}}
\def\tr{\mathrm{Tr}}
\def\ccc{\mathbb{C}}
\def\bb{\mathbb{B}}
\def\dd{\mathbb{D}}
\def\hh{\mathbb{H}}
\def\zz{\mathbb{Z}}
\def\rr{\mathbb{R}}
\def\ff{\mathbb{F}}
\def\oo{\mathbb{O}}
\def\uu{\mathbb{U}}
\def\clo{\mathcal{O}}
\def\clw{\mathcal{W}}
\def\pt{\partial}
\def\bpt{\overline{\pt}}
\def\ud{\mathrm{d}}
\def\spann{\mathrm{span}}
\def\re{\mathrm{Re}}
\def\ima{\mathrm{Im}}
\title{Stable Forms, Vector Cross Products and Their Applications in Geometry}
\author{Teng Fei}
\begin{document}
\maketitle{}

\section{Introduction: Two Classical Problems}

\subsection{Stable Forms}

Let $V$ be an $n$-dimensional vector space over $\rr$ or $\ccc$. A $p$-form $\varphi\in\wedge^pV^*$ is called \emph{stable} if its orbit under the natural $GL(V)$-action is an open subset of $\wedge^pV^*$. According to \cite{hitchin2001}, it is classically known (see \cite{reichel1907}, \cite{schouten1931}, \cite{gurevich1935} and \cite{gurevich1948})\footnote{It appears that \'Elie Cartan \cite{cartan1894} and Engel \cite{engel1900}'s work definitely should be added into this list. For an amazing account of related math and history, see \cite{agricola2008}.} that stable forms occur only in the following cases:
\begin{itemize}
\item $p=1$, arbitrary $n\in\zz_+$.
\item $p=2$, arbitrary $n\in\zz_+$.
\item $p=3$, $n=6,7$ or 8.
\item The dual of each above situations. That is, if $p$-forms on $V$ have an open orbit , so do $(n-p)$-forms.
\end{itemize}

One can list the number of open orbits and write down the canonical form and stabilizer of each orbit case by case. The answer will be dependent on the field we are working with. We would like to omit the detailed description of each orbit at this moment and leave it to the next section.

\subsection{Vector Cross Products}

Let $W$ be an $m$-dimensional real vector space with an inner product $\langle\cdot,\cdot\rangle$ which is positive definite. Citing \cite{eckmann1942}, an \emph{$r$-fold cross product} on $W$ is a continuous map $X: W^r\to W$ such that \[\begin{split}&\langle X(w_1,\dots,w_r),w_i\rangle=0,\quad 1\leq i\leq r,\\ &X(w_1,\dots,w_r)\neq0\textrm{ as long as }w_1\wedge\dots\wedge w_r\neq0\end{split}\] for any $w_1,\dots,w_r\in W$.

Let $S_{m,k}$ be the Stiefel manifold, i.e., \[S_{m,k}=\{\textrm{orthonomal $k$-frames in }\rr^m\},\] where the standard metric on $\rr^m$ is equipped and if $k=m$, we further require that the $m$-frames are consistent with a given orientation. It is obvious that $S_{m,1}=S^{m-1}$ and $S_{m,m}=SO(m)$ and there is a natural fibration \[\pi^m_{n,k}:S_{m,n}\to S_{m,k}\] for $m\geq n\geq k$ by forgetting the last $n-k$ vectors of an orthonormal $n$-frame in $\rr^m$.

It is not hard to see that the existence of an $r$-fold vector cross product on $\rr^m$ is equivalent to the existence of a section of the fibration \[\pi^m_{r+1,r}:S_{m,r+1}\to S_{m,r}.\] Therefore one can ask the more general question that which fibrations $\pi^m_{n,k}:S_{m,n}\to S_{m,k}$ admit a section. This problem was completely solved by Eckmann \cite{eckmann1942} and Whitehead \cite{whitehead1962} based on the work of Borel \cite{borel1953} and Adams \cite{adams1960}. The answer turns out to be that $\pi^m_{n,k}:S_{m,n}\to S_{m,k}$ admits a section if and only if
\begin{itemize}
\item $k=1$, it reduces to the problem of vector fields on spheres which was solved by Adams.
\item $n=m$, $k=m-1$ for any $m\geq3$.
\item $m=7$, $n=3$ and $k=2$.
\item $m=8$, $n=4$ and $k=3$.
\end{itemize}
In any of the above cases with $n=k+1$, there exists a vector cross product which can be further made to be multilinear and skew-symmetric. Such vector cross products can be beautifully constructed using Hodge star operators and composition algebras.

We may extend the notion of vector cross product to vector spaces over more general fields as follows. Let $W$ be a finite dimensional vector space over a field $\ff$ whose characteristic is not 2 with a non-degenerate quadratic form $\langle\cdot,\cdot\rangle$\footnote{The quadratic form $\langle\cdot,\cdot\rangle$ can be indefinite when $\ff$ is $\rr$.}. An \emph{$r$-fold vector cross product} on $W$ is a multilinear map $X:W^r\to W$ such that \[\begin{split}
&\langle X(w_1,\dots,w_r),w_i\rangle=0,\quad 1\leq i\leq r,\\ &\langle X(w_1,\dots,w_r),X(w_1,\dots,w_r)\rangle=\det(\langle w_i,w_j\rangle)\end{split}\] for any $w_1,\dots,w_r\in W$.

Using purely algebraic method, Brown and Gray \cite{brown1967} classified all vector cross products. It turns out that the only possible numbers of $n$ and $r$ are exactly those given by Eckmann and Whitehead. In many cases, for instance $\ff=\rr$, all the isomorphism classes of vector cross products are also determined. We will say more about it in Section 3.

\subsection{Connection}

The main goal of this paper is to explore the connection between stable forms and vector cross products and its applications in differential geometry. We will be exclusively working with the real numbers. So from now on, all vector spaces are over $\rr$ without further mentioning. We show that stable forms and vector cross products are in some sense the two faces of the same coin. Using this dictionary, we give an explanation of the Riemannian and pseudo-Riemannian nature associated with the two open orbits of stable forms in dimension 6 and 7. This correspondence also leads to many new results in differential geometry. We prove a few refined results of Calabi \cite{calabi1958} and Gray \cite{gray1969} and their analogues in paracomplex geometry. In particular, new balanced 3-folds with trivial canonical bundle are identified. We are also able to construct many manifolds with $G_2$-structure of class $\clw_3$, and as a consequence, we obtain a variety of solutions to the Killing spinor equations with constant dilaton in dimension 7.

This paper is organized as follows. In Section 2, we will briefly review the theory of 3-forms in dimension 6 and 7. Section 3 is devoted to vector cross products on $\rr^7$ and $\rr^8$. In Section 4 we establish the correspondence between stable 3-forms in 6 and 7 dimensions and vector cross products on $\rr^7$ and $\rr^8$. Some geometrical consequences of this correspondence will be derived in Section 5. For a somewhat independent interest, several topics on applying Hitchin's nonlinear Hodge theory to the orbit $\clo_6^+$ will be discussed in Section 6.

\subsection{Acknowledgement}

The author would like to thank Prof. Shing-Tung Yau and Prof. Victor Guillemin for their constant encouragement and help. The author is also indebted with various helpful discussions with Bao-Sen Wu, Siu-Cheong Lau, Peter Smillie and Cheng-Long Yu. Thanks also goes to Prof. Simon Chiossi for pointing me to \cite{chiossi2002} and \cite{cabrera2006}.

\section{Classical Theory of Stable 3-forms}

In this section, we will review the theory of 3-forms in 6 and 7 dimensional vector spaces, mainly following \cite{hitchin2000}, \cite{hitchin2001}, \cite{hitchin2004} and \cite{herz1983}.

\subsection{3-forms in Dimension 6}

Let $V$ be an oriented 6-dimensional real vector space. For any 3-form $\Omega\in\wedge^3V^*$ on $V$, one can associate it with a linear map $K_\Omega:V\to V\otimes \wedge^6V^*$ by defining \[K_\Omega(v)=-\iota_v\Omega\wedge\Omega\in\wedge^5V^*\cong V\otimes\wedge^6V^*\] for any $v\in V$, where $\iota_v$ denotes the contraction with $v$.

One can define a quadratic function $\lambda:\wedge^3V^*\to(\wedge^6V^*)^2$ on the space of 3-forms by \[\lambda(\Omega)=\frac{1}{6}\tr(K^2_\Omega)\in(\wedge^6V^*)^2.\] We say $\lambda(\Omega)$ is \emph{positive} if it can be written as a square of an element of $\wedge^6V^*$ and we say it is \emph{negative} if $-\lambda(\Omega)$ is positive.

We have the following facts about stable 3-forms in 6 dimension:
\begin{thm}\label{thm21}
There are exactly two $GL(V)$-open orbits in $\wedge^3V^*$ which can be characterized by \[\clo_6^+:=\{\Omega\in\wedge^3V^*:\lambda(\Omega)>0\}\] and \[\clo_6^-:=\{\Omega\in\wedge^3V^*:\lambda(\Omega)<0\}\] respectively.

An element $\Omega^+$ of $\clo_6^+$ is of the form \[\Omega^+=e^1\wedge e^2\wedge e^3+e^5\wedge e^6\wedge e^7,\] where $\{e^1,e^2,e^3,e^5,e^6,e^7\}$ is a basis\footnote{This seemingly weird notation is designed for later use.} of $V^*$. Its stabilizer is isomorphic to $SL(3,\rr)\times SL(3,\rr)$.

An element $\Omega^-$ of $\clo_6^-$ is of the form \[\begin{split}\Omega^-&=e^1\wedge e^2\wedge e^3-e^1\wedge e^6\wedge e^7+e^2\wedge e^5\wedge e^7-e^3\wedge e^5\wedge e^6\\ &=\re(e^1+\sqrt{-1}e^5)(e^2+\sqrt{-1}e^6)(e^3+\sqrt{-1}e^7).\end{split}\] Its stabilizer is isomorphic to $SL(3,\ccc)$.
\end{thm}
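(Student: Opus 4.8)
The plan is to treat $\lambda$ as the fundamental relative invariant that separates the two open orbits, and then to use the endomorphism $K_\Omega$ to manufacture, on each orbit, a canonical (para)complex structure that simultaneously forces the normal form and pins down the stabilizer. First I would record the equivariance of the construction $\Omega\mapsto K_\Omega$: using $\iota_v\circ g^*=g^*\circ\iota_{gv}$ one checks $K_{g^*\Omega}=g^*\circ K_\Omega\circ g$ for every $g\in GL(V)$. Since $GL(V)$ acts on the square line $(\wedge^6V^*)^2$ by positive scalars (its positive elements being exactly the squares) and $\lambda$ is equivariant, the positivity or negativity of $\lambda(\Omega)$ is an intrinsic $GL(V)$-invariant. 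Hence $\clo_6^+=\{\lambda>0\}$ and $\clo_6^-=\{\lambda<0\}$ are disjoint, $GL(V)$-invariant, and open (as $\lambda$ is a quartic polynomial and having a fixed nonzero sign is an open condition). Because $\{\lambda=0\}$ is a proper real-algebraic subset of the $20$-dimensional $\wedge^3V^*$ it has empty interior, so every open orbit lies in $\clo_6^+\sqcup\clo_6^-$; it then remains to show each of these two sets is a single orbit and to compute the stabilizers.

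The linear-algebraic heart is the universal identity $K_\Omega^2=\lambda(\Omega)\,\id_V$ in $\endo(V)\otimes(\wedge^6V^*)^2$, which I would establish by direct computation (or cite from Hitchin's framework). Granting it, on $\clo_6^+$ I choose $\nu\in\wedge^6V^*$ with $\nu^2=\lambda(\Omega)$ and set $P:=K_\Omega/\nu$, so that $P^2=\id$; since $\tr K_\Omega=0$ (indeed $\sum_j e^j\wedge K_\Omega(e_j)=-3\,\Omega\wedge\Omega=0$, using $\sum_j e^j\wedge\iota_{e_j}\Omega=3\Omega$), the eigenvalues of $P$ are $\pm1$, each of multiplicity $3$, giving a paracomplex splitting $V=V_+\oplus V_-$ with $\dim V_\pm=3$. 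On $\clo_6^-$ I instead take $\nu$ with $\nu^2=-\lambda(\Omega)$ and set $J:=K_\Omega/\nu$, so that $J^2=-\id$ is an almost complex structure and $V\cong\ccc^3$. Direct evaluation of $K$ on the two model forms confirms $\lambda(\Omega^+)>0$ and $\lambda(\Omega^-)<0$, so the representatives lie in the expected sets.

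The key remaining step is a purity lemma: on $\clo_6^+$ that $\Omega\in\wedge^3V_+^*\oplus\wedge^3V_-^*$, with no mixed $(2,1)$ or $(1,2)$ components, and on $\clo_6^-$ that the $\ccc$-linear extension of $\Omega$ has vanishing $(2,1)$ and $(1,2)$ parts. I would prove this by expanding the eigen-relation $K_\Omega(v)=\nu\,Pv$ (respectively $\nu\,Jv$) into its bidegree components with respect to the splitting and matching types. Once purity is known, on $\clo_6^+$ we get $\Omega=\Omega_++\Omega_-$ with $\Omega_\pm$ a nonzero element of the one-dimensional $\wedge^3V_\pm^*$; choosing bases $e^1,e^2,e^3$ of $V_+^*$ and $e^5,e^6,e^7$ of $V_-^*$ adapted to these volume forms carries $\Omega$ to $\Omega^+$. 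On $\clo_6^-$ purity says $\psi:=\Omega+\sqrt{-1}\,\widehat\Omega$ is a nonvanishing $(3,0)$-form; normalizing it by a complex basis $z^j=e^j+\sqrt{-1}\,e^{j+4}$ gives $\Omega=\re(z^1z^2z^3)=\Omega^-$. Thus each set is a single orbit, and there are exactly two open orbits.

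For the stabilizers, any $g$ fixing $\Omega^+$ permutes the eigenspaces of $K_{\Omega^+}$; the orientation-preserving part ($\det g=1$) fixes each $V_\pm$ and restricts to volume-preserving maps $g_\pm\in SL(V_\pm)\cong SL(3,\rr)$, yielding $SL(3,\rr)\times SL(3,\rr)$, while an orientation-reversing involution interchanges $V_+$ and $V_-$ (excluded once $V$ is oriented). Likewise any $g$ fixing $\Omega^-$ commutes with $J$, hence lies in $GL(3,\ccc)$, and preserving the complex volume form $\psi$ forces $\det_\ccc g=1$, giving $SL(3,\ccc)$ (again up to an orientation-reversing conjugation). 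The step I expect to be the main obstacle is the purity lemma: the bidegree bookkeeping for the mixed parts of $-\iota_v\Omega\wedge\Omega$ is delicate, and one must show that the mixed components of $\Omega$ actually vanish rather than merely drop out of a single eigenvalue relation. A conceptual alternative, which I would keep in reserve, is to invoke the classification of $(\mathrm{GL}_6,\wedge^3)$ as a regular prehomogeneous vector space: $\{\lambda\neq0\}$ is a single $\mathrm{GL}_6(\ccc)$-orbit, whose real points fall into exactly the two real forms with isotropy $SL(3,\rr)\times SL(3,\rr)$ and $SL(3,\ccc)$.
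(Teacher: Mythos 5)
The paper offers no argument of its own for this theorem: the ``proof'' is a citation to Section 2 of Hitchin's paper, where the list of orbits and their normal forms is in turn imported from the classical classification of $3$-forms in six variables, and the statements about $K_\Omega$, $\lambda$, and the induced (para)complex structures are then verified on the representatives. Your proposal runs in the opposite direction --- derive the normal forms from the operator $K_\Omega$ --- and its skeleton is sound: the equivariance of $\Omega\mapsto K_\Omega$, the invariance of the sign of $\lambda$, the openness of $\{\lambda\neq 0\}$ together with the empty interior of $\{\lambda=0\}$, the identity $K_\Omega^2=\lambda(\Omega)\,\id$, and the trace computation $\sum_j e^j\wedge K_\Omega(e_j)=-3\,\Omega\wedge\Omega=0$ forcing the $\pm1$-eigenspaces of $P$ to be $3$-dimensional are all correct, as are the stabilizer computations once the normal forms are in hand (modulo the harmless point that the full $GL(V)$-stabilizers are extensions of $SL(3,\rr)\times SL(3,\rr)$ and $SL(3,\ccc)$ by a $\zz/2$ swapping, respectively conjugating, the two factors; the theorem refers to the identity components).

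The one genuine gap is exactly the step you flag: the purity lemma. The relation $K_\Omega=\nu J$ (or $\nu P$) is \emph{quadratic} in $\Omega$, so decomposing $\Omega=\alpha+\beta+\bar\beta+\bar\alpha$ by type and expanding $-\iota_v\Omega\wedge\Omega$ for $v$ in the $(+i)$-eigenspace, the vanishing of the wrong-type component of $K_\Omega(v)$ reads
\[
\iota_v\alpha\wedge\bar\beta+\iota_v\beta\wedge\beta+\iota_v\bar\beta\wedge\alpha=0 ,
\]
a system of quadratic constraints in which the mixed part $\beta$ could a priori survive by cancellation against $\alpha$; ``matching bidegrees'' in the single eigen-relation does not by itself yield $\beta=0$. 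You need an additional input: either a direct decomposability argument (show that a suitable complex combination of $\Omega$ and $\Omega(J\cdot,\cdot,\cdot)$ is annihilated by contraction with every $(0,1)$-vector, hence is a $(3,0)$-form and therefore automatically decomposable because $\wedge^{3,0}V^*$ is a line), or the classical/prehomogeneous classification of $GL_6$-orbits on $\wedge^3$ that you keep in reserve --- the latter being precisely what the paper and Hitchin actually invoke. With either repair the rest of your argument closes; without it, the claim that each of $\clo_6^{\pm}$ is a \emph{single} orbit is not yet established.
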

\begin{proof}
See \cite{hitchin2000}, Section 2.
\end{proof}
\begin{rmk}
A more natural interpretation of the orbit $\clo_6^+$ should be the following. Let $\dd:=\rr[\tau]/(\tau^2-1)$ be the algebra of paracomplex numbers\footnote{Paracomplex numbers are also known as split-complex numbers, tessarines, double numbers, etc. For an amazing list of synonyms of paracomplex number, see \href{http://en.wikipedia.org/wiki/Split-complex_number\#Synonyms}{http://en.wikipedia.org/wiki/Split-complex\_number\#Synonyms}.}. A paracomplex number $u$ can be expressed in the form $u=a+b\tau$ where $a,b\in\rr$ are the so-called real and imaginary parts of $u$. After a suitable coordinate change, one finds out that an element $\Omega^+\in\clo_6^+$ is of the form \[\begin{split}\Omega^+&=e^1\wedge e^2\wedge e^3+e^1\wedge e^6\wedge e^7-e^2\wedge e^5\wedge e^7+e^3\wedge e^5\wedge e^6\\ &=\re(e^1+\tau e^5)(e^2+\tau e^6)(e^3+\tau e^7).\end{split}\] We will adopt this interpretation in the following sections.
\end{rmk}

\begin{thm}
For any $\Omega^-\in\clo_6^-$, we can define a complex structure $J_{\Omega^-}:V\to V$ by letting \[J_{\Omega^-}(v)=\dfrac{K_{\Omega^-}(v)}{\sqrt{-\lambda(\Omega^-)}}\] for any $v\in V$. Here $\sqrt{-\lambda(\Omega^-)}\in\wedge^6V^*$ is taken to be the 6-form consistent with the given orientation which squares to $-\lambda(\Omega^-)$. With respect to $J_{\Omega^-}$, $\Omega^-$ is the real part of a decomposable complex $(3,0)$-form.

For any $\Omega^+\in\clo_6^+$, we can define a paracomplex structure $L_{\Omega^+}$ on $V$, that is, $L_{\Omega^+}:V\to V$ satisfies $L_{\Omega^+}^2=\id$ and the $(\pm1)$-eigenspaces of $L_{\Omega^+}$ are both 3-dimensional. Like above, it is defined to be \[L_{\Omega^+}(v)=\dfrac{K_{\Omega^+}(v)}{\sqrt{\lambda(\Omega^+)}}.\] With respect to $L_{\Omega^+}$, $\Omega^+$ is the real part of a decomposable paracomplex $(3,0)$-form.
\end{thm}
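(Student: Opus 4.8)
The plan is to reduce both statements to the single algebraic identity
\[K_\Omega\circ K_\Omega=\lambda(\Omega)\cdot\id_V\]
valid for every $\Omega\in\wedge^3V^*$ in $\endo(V)\otimes(\wedge^6V^*)^2$, where $K_\Omega$ is viewed as an element of $\endo(V)\otimes\wedge^6V^*$, composition is taken in the $\endo(V)$ factor, and the two $\wedge^6V^*$ factors are multiplied. Granting this, the first halves of both assertions are immediate: since $\sqrt{-\lambda(\Omega^-)}$ and $\sqrt{\lambda(\Omega^+)}$ square to $-\lambda(\Omega^-)$ and $\lambda(\Omega^+)$, one gets $J_{\Omega^-}^2=K_{\Omega^-}^2/(-\lambda(\Omega^-))=-\id_V$ and $L_{\Omega^+}^2=K_{\Omega^+}^2/\lambda(\Omega^+)=\id_V$, so $J_{\Omega^-}$ is a complex structure and $L_{\Omega^+}$ squares to the identity.

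To establish the displayed identity I would argue by equivariance. The assignment $\Omega\mapsto K_\Omega$ is $GL(V)$-equivariant by construction, hence so are $\lambda(\Omega)$ and the tensor $K_\Omega^2-\lambda(\Omega)\id_V$. Since each of $\clo_6^+$ and $\clo_6^-$ is a single $GL(V)$-orbit by Theorem \ref{thm21}, it suffices to verify the identity at one representative of each. On the canonical forms this is a direct computation: fixing $\vol=e^1\wedge e^2\wedge e^3\wedge e^5\wedge e^6\wedge e^7$ and the isomorphism $\wedge^5V^*\cong V\otimes\wedge^6V^*$ given by $w\otimes\vol\mapsto\iota_w\vol$, one finds that (after stripping the $\vol$ factor) $K_{\Omega^-}$ sends $e_1\mapsto-2e_5,\ e_5\mapsto 2e_1$ and cyclically on $(e_2,e_6),(e_3,e_7)$, so that $\lambda(\Omega^-)=-4\,\vol^2$ and $K_{\Omega^-}^2=\lambda(\Omega^-)\id_V$; while $K_{\Omega^+}$ is proportional to $\mathrm{diag}(-1,-1,-1,1,1,1)$ with $\lambda(\Omega^+)=\vol^2$ and $K_{\Omega^+}^2=\lambda(\Omega^+)\id_V$. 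Equivariance then propagates the identity over all of $\clo_6^\pm$. (Equivalently, $K_\Omega^2-\lambda(\Omega)\id_V$ is a polynomial map of $\Omega$ vanishing on the dense open set $\clo_6^+\cup\clo_6^-$, hence identically.)

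It remains to treat the refined claims. For the $(\pm1)$-eigenspaces of $L_{\Omega^+}$ being both $3$-dimensional, I would use that $\tr K_\Omega=0$ for every $3$-form, which follows from
\[\tr K_\Omega\cdot\vol=\sum_i e^i\wedge(-\iota_{e_i}\Omega\wedge\Omega)=-\Big(\sum_i e^i\wedge\iota_{e_i}\Omega\Big)\wedge\Omega=-3\,\Omega\wedge\Omega=0,\]
since $\Omega\wedge\Omega=0$ for a $3$-form; combined with $L_{\Omega^+}^2=\id_V$ this forces eigenvalues $+1$ and $-1$ each with multiplicity $3$. For the decomposability statements, observe that with respect to $J_{\Omega^-}$ the space of $(3,0)$-forms is $\wedge^3(V^{1,0})^*$, which is one-dimensional because $V^{1,0}$ has complex dimension $3$; thus every $(3,0)$-form is automatically decomposable, and likewise in the paracomplex case. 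The only content is therefore that $\Omega$ carries no component of mixed type, which is again a $GL(V)$-equivariant condition and so may be checked on the canonical representatives, where $\Omega^-=\re\big((e^1+\sqrt{-1}e^5)\wedge(e^2+\sqrt{-1}e^6)\wedge(e^3+\sqrt{-1}e^7)\big)$ is visibly the real part of a decomposable $(3,0)$-form for the computed $J_{\Omega^-}$, and analogously for $\Omega^+$ with the paracomplex unit $\tau$ as in the Remark.

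I expect the main obstacle to be the careful sign bookkeeping in the canonical-form computation, namely tracking signs through the contractions $\iota_{e_i}\Omega\wedge\Omega$ and through the isomorphism $\wedge^5V^*\cong V\otimes\wedge^6V^*$, and making the equivariance argument airtight across orientation-reversing elements of $GL(V)$, since $\sqrt{-\lambda(\Omega^-)}$ depends on the orientation. Such an element only replaces $J_{\Omega^-}$ by $-J_{\Omega^-}$ and exchanges the types $(3,0)$ and $(0,3)$, which leaves every claim intact because the real part is invariant under conjugation; spelling this out cleanly is the one genuinely delicate point.
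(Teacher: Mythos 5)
Your proposal is correct. Note that the paper itself supplies no argument for this theorem beyond the citation ``See \cite{hitchin2000}, Section 2,'' so there is no in-paper proof to diverge from; your write-up is in fact a faithful self-contained reconstruction of Hitchin's argument, resting on the same key identity $K_\Omega^2=\lambda(\Omega)\,\id_V$, verified on the canonical representatives of $\clo_6^{\pm}$ and propagated by $GL(V)$-equivariance. Your supplementary points all check out: the normal-form computations ($K_{\Omega^+}=\mathrm{diag}(-1,-1,-1,1,1,1)\otimes\vol$ with $\lambda=\vol^2$, and $e_1\mapsto-2e_5$, $e_5\mapsto 2e_1$ etc.\ with $\lambda=-4\vol^2$) are correct with the stated identification $w\otimes\vol\mapsto\iota_w\vol$; the trace identity $\tr K_\Omega\cdot\vol=-3\,\Omega\wedge\Omega=0$ is valid and cleanly forces the $3+3$ eigenspace split of $L_{\Omega^+}$; and the observation that $(3,0)$-forms are automatically decomposable since $\wedge^3(V^{1,0})^*$ is a line disposes of decomposability once purity of type is checked on a representative. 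Your closing caveat about orientation-reversing elements of $GL(V)$ is handled correctly: they negate $J_{\Omega^-}$ and swap types $(3,0)\leftrightarrow(0,3)$, which is harmless since $\re(\alpha)=\re(\bar\alpha)$.
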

\begin{proof}
See \cite{hitchin2000}, Section 2.
\end{proof}
\begin{rmk}
As proved in this theorem, for any $\Omega^-\in\clo_6^-$, there exists a decomposable complex 3-form $\alpha$ such that \[\Omega^-=\frac{1}{2}(\alpha+\bar{\alpha})=\re(\alpha) \textrm{ and }\alpha\wedge\bar{\alpha}\neq0.\] If we further require that $\sqrt{-1}\alpha\wedge\bar{\alpha}>0$, then the choice of $\alpha$ is unique. With such requirement, we use the notation \[\hat{\Omega}^-=\frac{\alpha-\bar{\alpha}}{2i}=\ima(\alpha).\] There is a natural $\ccc^*$-action on $\clo_6^-$ given by \[(\rho\exp(i\theta),\Omega^-)\mapsto\rho\cdot\re(e^{-i\theta}\alpha).\] It is not hard to see that two elements of $\clo_6^-$ lie in the same $\ccc^*$-orbit if and only if they define the same complex structure.

There is a similar story for the orbit $\clo_6^+$.
\end{rmk}

\subsection{3-forms in Dimension 7}

The main result of this subsection goes back to the work of \'Elie Cartan. For an exposition in English, we refer to Herz's paper \cite{herz1983}.

Let $W$ be a 7-dimensional real vector space. For a 3-form $\varphi\in\wedge^3W^*$, we can define a symmetric quadratic form $Q_\varphi$ on $W$ with value in $\wedge^7W^*$ by \[Q_\varphi(v,w)=\iota_v\varphi\wedge\iota_w\varphi\wedge\varphi\in\wedge^7W^*.\] After trivializing $\wedge^7W^*$, we get a genuine quadratic form on $W$ which we still call it $Q_\varphi$. Apparently the absolute value of the signature of $Q_\varphi$ does not depend on the choice of the trivialization. With further work, one can verify that the open $GL(W)$-orbit of a stable 3-form $\varphi$ is exactly given by the set of $\phi$'s such that $Q_\phi$ is nondegenerate and the absolute signature of $Q_\phi$ agrees with that of $Q_\varphi$. There are exactly two such orbits, which we denote by $\clo_7^-$ and $\clo_7^+$ respectively, with absolute signature 7 and 1 respectively.

A typical element $\varphi^-\in\clo_7^-$ can be written as \[\varphi^-=e^1\wedge e^2\wedge e^3-e^1\wedge e^6\wedge e^7+e^2\wedge e^5\wedge e^7-e^3\wedge e^5\wedge e^6+e^1\wedge e^4\wedge e^5+e^2\wedge e^4\wedge e^6+e^3\wedge e^4\wedge e^7,\] where $\{e^1,\dots,e^7\}$ is a basis of $W^*$. The identity component of the isotropy group of $\varphi^-$ is isomorphic to the compact 14-dimensional simple Lie group $G_2$, which in addition fixes a positive definite quadratic form on $W$.

On the other hand, the canonical form of an element $\varphi^+\in\clo_7^+$ is \[\varphi^+=e^1\wedge e^2\wedge e^3+e^1\wedge e^6\wedge e^7-e^2\wedge e^5\wedge e^7+e^3\wedge e^5\wedge e^6-e^1\wedge e^4\wedge e^5-e^2\wedge e^4\wedge e^6-e^3\wedge e^4\wedge e^7.\] The identity component of the isotropy group of $\varphi^+$ is isomorphic to the noncompact real form of $G_2$, which in addition fixes a quadratic form of signature $(3,4)$ on $W$.

\section{Vector Cross Products on $\rr^7$ and $\rr^8$}

The multiplication table of the 2-fold cross product on $\rr^7$ was already known to Cayley. However, it was not until 1965 that an explicit formula for 3-fold cross product on $\rr^8$ was found by Zvengrowski \cite{zvengrowski1965}. These are the only exceptional vector cross products according to the theorem of Eckmann\cite{eckmann1942}, Whitehead\cite{whitehead1962} and Brown-Gray\cite{brown1967}. In this section, we will recall the construction of these vector cross products and the classification theorem of Brown-Gray.

\subsection{Octonions and Split-Octonions}

Let $\hh$ be the division algebra of quaternion numbers. One can construct octonions and split-octonions from $\hh$ via the so-called Cayley-Dickson construction. Consider a pair of quaternion numbers which we write as $a+bl$ where $a,b\in\hh$ and $l$ is an indeterminate. We can define a product on such pairs of quaternion numbers by setting \[(a+bl)(c+dl)=(ac+l^2\bar{d}b)+(da+b\bar{c})l.\] If we set $l^2=-1$, then we get a non-associative division algebra $\oo$ known as the octonions. If we set $l^2=1$ instead, what we get is a non-associative composition algebra $\bb$ of the split-octonions. Let $\{e_0,e_1,e_2,e_3\}=\{1,i,j,k\}$ be the standard basis of $\hh$ and denote $e_{s+4}=e_sl$ for $s=0,1,2,3$, then in this way we get a basis $\{e_0,\dots,e_7\}$ of $\oo$ or $\bb$. For an element $x$ of either $\oo$ or $\bb$, we can express it as \[x=x_0e_0+\sum_{i=1}^7x_ie_i,\] where $x_0,\dots,x_7\in\rr$, and define its conjugate $\bar{x}$ by \[\bar{x}=x_0e_0-\sum_{i=1}^7x_ie_i.\] No surprise, we denote the real and imaginary parts of $x$ by $\re(x)=x_0e_0$ and $\ima(x)=\sum_{i=1}^7x_ie_i$ respectively. Furthermore, we can define a quadratic form $N(x)=x\bar{x}\in\rr$, making $\oo$ or $\bb$ a composition algebra, i.e., the following identity is satisfied \[N(xy)=N(x)N(y).\] This quadratic form $N$ defines an inner product on $\oo$ or $\bb$, with signature $(8,0)$ or $(4,4)$ respectively. We will use $\langle\cdot,\cdot\rangle$ to denote this particular inner product or its restriction on subspaces.

For later use, let us list some useful properties of $\oo$ and $\bb$:
\begin{itemize}
\item Both $\oo$ and $\bb$ are alternative algebras. As a consequence, any subalgebra generated by two elements (and their conjugates) is associative.
\item $\overline{xy}=\bar{y}\bar{x}$.
\item $x\bar{y}+y\bar{x}=2\langle x,y\rangle=2\langle\bar{x},\bar{y}\rangle$.
\item $x(\bar{y}z)+y(\bar{x}z)=2\langle x,y\rangle z$.
\end{itemize}

\subsection{Construction of Vector Cross Products}

We identify $\rr^7$ with the space of purely imaginary octonions $\ima(\oo)$ or purely imaginary split-octonions $\ima(\bb)$ with the standard inner product $\langle\cdot,\cdot\rangle$ of signature $(7,0)$ or $(3,4)$. In both cases, a 2-fold vector cross product $X$ can be defined on $\rr^7$ by \[X(a,b)=a\cdot b+\langle a,b\rangle e_0,\] where $\cdot$ is the multiplication in $\oo$ or $\bb$ in the previous subsection. It straightforward to check that this defines a cross product $X:\rr^7\times\rr^7\to\rr^7$ which is skew-symmetric. Furthermore, Theorem 4.1 of \cite{brown1967} showed that these are the only cross products on $\rr^7$ up to isomorphism. By an isomorphism of vector cross products between $(V,\langle\cdot,\cdot\rangle,X)$ and $(V',\langle\cdot,\cdot\rangle',X')$, we mean a linear isomorphism $\varphi:V\to V'$ preserving the inner product such that \[\varphi(X(v_1,\dots,v_r))=X'(\varphi(v_1),\dots,\varphi(v_r))\] for any $v_1,\dots,v_r\in V$.

For the $\rr^8$ scenario, we regard it as $\oo$ or $\bb$ with the inner product $\langle\cdot,\cdot\rangle$ of signature $(8,0)$ or $(4,4)$. In both cases, we can define two 3-fold vector cross products by \[\begin{split}&X_1(a,b,c)=-a(\bar{b}c)+\langle a, b\rangle c+\langle b,c\rangle a-\langle c,a\rangle b,\\ &X_2(a,b,c)=-(a\bar{b})c+\langle a, b\rangle c+\langle b,c\rangle a-\langle c,a\rangle b.\end{split}\] Note that the expression for $X_1$ is exactly the formula discovered by Zvengrowski \cite{zvengrowski1965}. It is routine to check that both $X_1$ and $X_2$ are totally skew-symmetric. Moreover, in Theorem 5.6 of \cite{brown1967}, Brown-Gray showed that $X_1$ and $X_2$ are not isomorphic to each other and they are the only isomorphism classes of 3-fold vector cross products.\footnote{In $\rr^8$, the only possible signatures for a 3-fold vector cross product to exist are $(8,0)$, $(4,4)$ and $(0,8)$. In each signature above, there are 2 non-isomorphic vector cross products $X_1$ and $X_2$. In this article, we only care about the signature $(8,0)$ and $(4,4)$ cases.}

It is clear from above description that 2-fold vector cross products on $\rr^7$ and 3-fold vector cross products on $\rr^8$ are closely related. Notice that $e_0$ lies in the center of $\oo$ or $\bb$ and the space of purely imaginary (split-)octonions is nothing but the orthogonal complement of $\rr\{e_0\}$, we see immediately that \[X(a,b)=X_1(e_0,a,b)=X_2(e_0,a,b).\]

\section{Relating Vector Cross Products and Stable Forms}

In this section, we establish the correspondence between vector cross products on $\rr^7$ and $\rr^8$ with sable 3-forms in dimension 6 and 7 in both directions.

\subsection{From Vector Cross Products to Stable Forms}

For simplicity, let us consider the Riemannian case first.

Let $(\oo,\langle\cdot,\cdot\rangle)\cong(\rr^8,\textrm{standard metric})$ be equipped with a 3-fold vector cross product $X':\wedge^3\oo\to\oo$ which can be either $X_1$ or $X_2$.

For any oriented line $l$ of $\oo$, there is a unique unit vector $a\in l$ compatible with the given orientation. We have the following result:

\begin{prop}
The 3-form $\varphi_l$ defined on $l^\perp$ by \[\varphi_l(x,y,z)=-\langle X'(x,y,z),a\rangle\] is a stable 3-form lying in the orbit $\clo_7^-$.
\end{prop}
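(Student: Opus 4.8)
The plan is to exploit the symmetry group of $X'$ to reduce to the model line $l = \rr\{e_0\}$, and then to recognize $\varphi_{e_0}$ as the standard associative three-form on the imaginary octonions. First I would record that $\varphi_l$ is genuinely a three-form on $l^\perp$: since $X'$ is totally skew-symmetric (as noted for both $X_1$ and $X_2$), the map $(x,y,z)\mapsto -\langle X'(x,y,z),a\rangle$ is alternating. It is convenient to encode $X'$ in the four-form $\Phi(u_0,u_1,u_2,u_3)=\langle X'(u_0,u_1,u_2),u_3\rangle$ on $\oo$; a short sign computation gives $\varphi_l = (\iota_a\Phi)|_{l^\perp}$. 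For $g\in SO(\oo)$ one checks that $g$ preserves $X'$ if and only if $g$ preserves $\Phi$, and the stabilizer of $\Phi$ in $SO(\oo)$ is a copy of $\mathrm{Spin}(7)$, which acts transitively on the unit sphere $S^7\subset\oo$ with isotropy $G_2$.

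Granting this, there exists $g\in\mathrm{Spin}(7)$ with $g(a)=e_0$. Since $g$ is a metric isometry preserving $X'$, it carries $l^\perp$ isometrically onto $e_0^\perp=\ima(\oo)$ and satisfies $g^*\varphi_{e_0}=\varphi_l$. Because membership in a fixed $GL$-orbit is invariant under linear isomorphism, it suffices to prove the statement for $a=e_0$. To compute $\varphi_{e_0}$ I would use that for purely imaginary $x,y,z$ one has $\bar y=-y$, so the correction terms in the formulas for $X_1$ and $X_2$ contribute nothing to $\langle\,\cdot\,,e_0\rangle=\re(\,\cdot\,)$, giving $\varphi_{e_0}(x,y,z)=-\re(x(yz))$ for $X'=X_1$ and $\varphi_{e_0}(x,y,z)=-\re((xy)z)$ for $X'=X_2$. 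Since $\oo$ is alternative, the associator $(xy)z-x(yz)$ is purely imaginary, so these two expressions coincide; thus $\varphi_{e_0}$ is independent of the choice between $X_1$ and $X_2$ and equals the standard associative calibration $\langle x\cdot y,z\rangle$ on $\ima(\oo)\cong\rr^7$.

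It then remains to identify the orbit. The associative three-form is stabilized in $GL(\ima(\oo))$ by the automorphism group $G_2=(\mathrm{Spin}(7))_{e_0}$, which here is the compact real form sitting inside $SO(7)$ and fixing the positive definite inner product on $\ima(\oo)$. Equivalently, writing $\varphi_{e_0}$ in the orthonormal basis $e_1,\dots,e_7$ recovers, after relabeling, the canonical form $\varphi^-$ of Section~2.2, and a direct evaluation of $Q_{\varphi_{e_0}}(v,w)=\iota_v\varphi_{e_0}\wedge\iota_w\varphi_{e_0}\wedge\varphi_{e_0}$ shows this quadratic form to be definite, i.e.\ of absolute signature $7$. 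Either route places $\varphi_{e_0}$, and hence $\varphi_l$, in the orbit $\clo_7^-$.

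The main obstacle is the symmetry reduction in the first two steps: the whole argument hinges on knowing that the stabilizer of $X'$ in $SO(\oo)$ acts transitively on $S^7$, i.e.\ that it is (a copy of) $\mathrm{Spin}(7)$ with isotropy $G_2$, rather than some smaller group. Once this structural fact is in hand, everything else is a short computation with the octonion product, and the alternativity identity (purely imaginary associator) is exactly what makes the answer insensitive to the choice between $X_1$ and $X_2$.
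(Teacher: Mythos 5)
Your proposal is correct and follows essentially the same route as the paper: use the transitivity of $\mathrm{Spin}(7)=\aut(\oo,X',\langle\cdot,\cdot\rangle)$ on $S^7$ to reduce to $a=e_0$, then compute $\varphi_{e_0}$ directly and match it with the canonical element of $\clo_7^-$. Your additional observations (that alternativity makes $X_1$ and $X_2$ give the same form at $e_0$, and that the orbit can be identified either by the canonical expression or by the definiteness of $Q_{\varphi}$) are just a more explicit unpacking of the ``straightforward calculation'' the paper invokes.
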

\begin{proof}
Recall that the automorphism group of $(\oo,X',\langle\cdot,\cdot\rangle)$ is isomorphic to the Lie group $Spin(7)$ which acts transitively on $S^7$. Making use of such symmetry, we only have to prove the proposition for $a=e_0$. In this case (either $X'=X_1$ or $X'=X_2$), straightforward calculation shows that \[\varphi_l=e^1\wedge e^2\wedge e^3-e^1\wedge e^6\wedge e^7+e^2\wedge e^5\wedge e^7-e^3\wedge e^5\wedge e^6+e^1\wedge e^4\wedge e^5+e^2\wedge e^4\wedge e^6+e^3\wedge e^4\wedge e^7,\] which is exactly the canonical form of an element in $\clo_7^-$ described in Section 2.
\end{proof}

For any oriented 2-dimensional subspace $P$ of $\oo$, one can define a complex structure $J_P$ on $P^\perp$ by \[J_P(v)=-X'(a,b,v),\] where $\{a,b\}$ is an oriented orthonormal basis of $P$. Indeed, using the properties listed in Section 3, one can easily check that $J_P$ is a complex structure on $P^\perp$ compatible with the restriction of $\langle\cdot,\cdot\rangle$. In addition, it is independent of the choice of the oriented orthonormal basis $\{a,b\}$.
\begin{prop}
Fix a choice of $\{a,b\}$, the 3-form $\Omega_{P,a}$ on $P^\perp$ defined by \[\Omega_{P,a}(x,y,z)=-\langle X'(x,y,z),a\rangle\] is a stable 3-form lying in the orbit $\clo^-_6$. In addition, $J_{\Omega_{P,a}}=J_P$ and $\hat{\Omega}_{P,a}$ can be represented by \[\hat{\Omega}_{P,a}(x,y,z)=\begin{cases}\langle X'(x,y,z),b\rangle & \textrm{ if }X'=X_1\\ -\langle X'(x,y,z),b\rangle & \textrm{ if }X'=X_2\end{cases}.\]
\end{prop}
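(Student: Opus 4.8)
The plan is to reduce to a single standard configuration by exploiting the symmetry group, exactly as in the preceding proposition, and then to read off all three assertions from the explicit canonical forms of Section 2. Since $\varphi_l$, $\Omega_{P,a}$, $J_P$ and the associated data $J_{\Omega_{P,a}}$, $\hat\Omega_{P,a}$ are all built solely from $X'$ and $\langle\cdot,\cdot\rangle$, they are equivariant under $\aut(\oo,X',\langle\cdot,\cdot\rangle)\cong Spin(7)$, once we orient $P^\perp$ by the convention that the orientation of $P$ given by $(a,b)$ followed by that of $P^\perp$ recovers the fixed orientation of $\oo$. Because $Spin(7)$ acts transitively on $S^7$ with stabilizer $G_2$, and $G_2$ in turn acts transitively on the unit sphere of $a^\perp$ with stabilizer $SU(3)$, the group $Spin(7)$ acts transitively on oriented orthonormal $2$-frames in $\oo$. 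Hence it suffices to verify everything for one convenient choice, and I would take $a=e_0$, $b=e_4$, so that $P^\perp=\spann\{e_1,e_2,e_3,e_5,e_6,e_7\}$.

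For the first assertion I would observe that $\Omega_{P,a}$ is simply the restriction of $\varphi_l$ to the hyperplane $P^\perp$ of $l^\perp=a^\perp$, since both are given by $-\langle X'(\cdot,\cdot,\cdot),a\rangle$. By the preceding proposition $\varphi_l$ equals the canonical $\clo_7^-$ form displayed in Section 2.2; setting $e^4=0$ to restrict to $P^\perp$ kills exactly the last three terms $e^1\wedge e^4\wedge e^5+e^2\wedge e^4\wedge e^6+e^3\wedge e^4\wedge e^7$ and leaves \[e^1\wedge e^2\wedge e^3-e^1\wedge e^6\wedge e^7+e^2\wedge e^5\wedge e^7-e^3\wedge e^5\wedge e^6,\] which is precisely the canonical element of $\clo_6^-$ in Theorem \ref{thm21}. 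This is exactly why the basis in Theorem \ref{thm21} is indexed by $\{1,2,3,5,6,7\}$. Thus $\Omega_{P,a}\in\clo_6^-$.

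For the second assertion I would compute $J_P$ directly from the octonion product. Using $X'(e_0,e_4,v)=X(e_4,v)=e_4\cdot v$ for $v\in P^\perp$ and the Cayley--Dickson rule with $e_4=l$, one finds $J_P e_1=e_5$, $J_P e_2=e_6$, $J_P e_3=e_7$. On the other hand, the canonical $\clo_6^-$ form above equals $\re(\alpha)$ with $\alpha=(e^1+\sqrt{-1}e^5)(e^2+\sqrt{-1}e^6)(e^3+\sqrt{-1}e^7)$, and the complex structure produced by $K_{\Omega_{P,a}}/\sqrt{-\lambda(\Omega_{P,a})}$ is precisely the one for which $\alpha$ is of type $(3,0)$, namely $e_1\mapsto e_5$, $e_2\mapsto e_6$, $e_3\mapsto e_7$. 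Matching the two requires checking that the induced orientation on $P^\perp$ selects $+\sqrt{-\lambda(\Omega_{P,a})}$ rather than its negative, which is the one genuinely delicate sign in the argument; granting this, $J_{\Omega_{P,a}}=J_P$.

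Finally, for $\hat\Omega_{P,a}$ I would expand $\alpha$ and extract $\ima(\alpha)$, obtaining a concrete $3$-form on $P^\perp$, and then compare it with the two candidates $\pm\langle X'(\cdot,\cdot,\cdot),e_4\rangle$. The key point is that $X_1$ and $X_2$ differ exactly by the associator, $X_1(x,y,z)-X_2(x,y,z)=(x\bar{y})z-x(\bar{y}z)$, and it is the pairing of this term against $b=e_4$ that flips the sign between the two formulas; a short computation then confirms $\hat\Omega_{P,a}=\langle X_1(\cdot,\cdot,\cdot),e_4\rangle$ and $\hat\Omega_{P,a}=-\langle X_2(\cdot,\cdot,\cdot),e_4\rangle$, once $\hat\Omega_{P,a}$ is fixed by the normalization $\sqrt{-1}\,\alpha\wedge\bar\alpha>0$. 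The main obstacle throughout is not any individual computation but the consistent bookkeeping of orientations and signs: fixing the orientation of $P^\perp$ so that the construction returns $+J_P$, and tracking the normalization of $\alpha$ so that the $X_1$ versus $X_2$ dichotomy emerges with exactly the stated signs.
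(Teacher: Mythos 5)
Your proposal is correct and follows the same overall strategy as the paper: reduce to a standard configuration using the $Spin(7)$-symmetry of $(\oo,X',\langle\cdot,\cdot\rangle)$ and then verify all three assertions by explicit computation in the base case $a=e_0$, $b=e_4$. The one genuine difference lies in how the freedom in the choice of oriented orthonormal basis $\{a,b\}$ of $P$ is absorbed. You invoke the (correct) fact that $Spin(7)$ acts transitively on ordered orthonormal $2$-frames in $\oo$, which collapses everything to the single frame $(e_0,e_4)$ at the cost of carefully fixing the induced orientation on $P^\perp$ so that $J_{\Omega_{P,a}}$ and $\hat{\Omega}_{P,a}$ transform equivariantly --- a point you rightly isolate as the delicate one. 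The paper instead uses only transitivity on $2$-dimensional subspaces to reach $P=\spann\{e_0,e_4\}$, and then handles the residual rotation of $\{a,b\}$ inside $P$ by observing that $\{a,b\}\mapsto\Omega_{P,a}$ intertwines an $S^1$-action on oriented frames with the restriction of the natural $\ccc^*$-action on $\clo_6^-$; the fact that the required $S^1$-actions for $X_1$ and $X_2$ rotate in opposite directions is precisely what produces the sign dichotomy in the formula for $\hat{\Omega}_{P,a}$. Your route is slightly more economical and pushes the $X_1$ versus $X_2$ distinction into a direct associator computation against $b$, whereas the paper's equivariance argument makes that sign difference conceptually transparent via the $\ccc^*$-orbit structure on $\clo_6^-$. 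Both arguments are valid and rest on the same explicit verification at $(e_0,e_4)$.
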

\begin{proof}
Again, we reduce the case to $P=\spann\{e_0,e_4\}$ by using the fact that $Spin(7)$ acts transitively on two dimensional subspaces of $\oo$, see, for instance, Proposition 2.1 of \cite{gray1969}. Without loss of generality, we may assume that $\{e_0,e_4\}$ is positively oriented. Straightforward calculation verifies the case $a=e_0$. The other cases can be derived from the following observation. If $X'=X_1$, we can define an $S^1$-action on oriented orthonormal basis of $P$ by \[(\exp(i\theta),\{a,b\})\mapsto (a\cos\theta-b\sin\theta,a\sin\theta+b\cos\theta).\] Then the map
$\{\textrm{oriented orthonormal basis of }P\}\to \clo^-_6$ given by \[\{a,b\}\mapsto\Omega_{P,a}\] is $S^1$-equivariant. If $X'=X_2$, we only need to replace the action by \[(\exp(i\theta),\{a,b\})\mapsto (a\cos\theta+b\sin\theta,-a\sin\theta+b\cos\theta).\]
\end{proof}

If we start with $(\rr^7,\textrm{standard metric})$ equipped with a 2-fold vector cross product $X$, we can simply define a stable 3-form $\varphi^-\in\clo^-_7$ on $\rr^7$ by setting \[\varphi^-(x,y,z)=\langle X(x,y),z\rangle.\] For any unit vector $b\in\rr^7$, the stable 3-form $\Omega^-_b=\varphi^-|_{b^\perp}$ lies in the orbit $\clo^-_6$.
\begin{rmk}
A unit vector $a\in\rr^8$ induces a 2-fold vector product $X$ on $a^\perp$ by the formula \[X(x,y)=X'(a,x,y).\] Such a reduction provides an alternative proof to Proposition 4.1 and 4.2.
\end{rmk}

The split case can be carried out in like manner. We only need to make sure that the line $l$ is space-like and the plane $P$ is Lorentzian. By replacing $Spin(7)$ by $Spin(3,4)$ and complex structures by paracomplex structures, we construct stable 3-forms in the orbits $\clo_7^+$ and $\clo^+_6$ respectively.

\subsection{From Stable Forms to Vector Cross Products}

Basically what we do here is to reverse the construction of the previous subsection.

Let $\Omega^-\in\clo^-_6$ be a stable 3-form on $V\cong\rr^6$. We know it defines a complex structure $J_{\Omega^-}$ on $V$. Let $\langle\cdot,\cdot\rangle$ be any positive definite inner product on $V$ compatible with $J_{\Omega^-}$, i.e., an inner product such that $J_{\Omega^-}$ is an orthogonal transformation. Such $\langle\cdot,\cdot\rangle$ always exists and we can define an Hermitian 2-form $\omega$ by \[\omega(x,y)=\langle J_{\Omega^-}(x),y\rangle.\] After scaling, we may even impose that \[\frac{1}{4}\Omega_-\wedge\hat{\Omega}_-=\frac{1}{6}\omega^3.\]

Consider the 7-dimensional vector space $W=V\oplus\rr$, where $\oplus$ is an orthogonal direct sum. Let $\beta$ be a unit 1-form on $W$ vanishing on $V$, then the 3-form \[\varphi^-=\Omega^--\beta\wedge\omega\] is a stable 3-form lying in the orbit $\clo^-_7$, which is compatible with the direct sum metric on $W$.

Given an element $\varphi^-\in\clo_7^-$, it is well-known that the group leaving $\varphi^-$ invariant is isomorphic to the compact $G_2$, therefore it determines a metric $\langle\cdot,\cdot\rangle$ on $W$. The 2-fold cross product $X$ on $W$ can be simply constructed by setting \[\langle X(x,y),z\rangle=\varphi^-(x,y,z).\] The construction of a 3-fold cross vector product on $W\oplus\rr$ from a 2-fold vector cross product on $W$ is classical, see Theorem 4.1 and 5.1 of \cite{brown1967} for instance.

For an element $\Omega^+\in\clo^+_6$, it defines a paracomplex structure $L_{\Omega^+}$ on $V$. A non-degenerate inner product $\langle\cdot,\cdot\rangle$ is said to be compatible with $L_{\Omega^+}$ if \[\langle L_{\Omega^+}(x),L_{\Omega^+}(y)\rangle=-\langle x,y\rangle\] for any $x,y\in V$. Given such an inner product, which always exist, we can define an Hermitian 2-form $\omega$ by \[\omega(x,y)=\langle L_{\Omega^+}(x),y\rangle.\] Like the Riemannian case, \[\varphi^+=\Omega^++\beta\wedge\omega\] is a stable 3-form on $W=V\oplus\rr$ lying in the orbit $\clo^+_7$. Notice that the $\rr$-component of $W$ in this case has a negative definite inner product. As proved by \'Elie Cartan \cite{cartan1894}, an element $\varphi^+$ of $\clo^+_7$ also determines a (3,4)-inner product on $W$. We can then construct 2-fold vector cross product and 3-fold vector cross product on $W$ and $W\oplus\rr$ respectively, exactly in the manner of Riemannian case.

\section{Geometric Consequences}

\subsection{Calabi and Gray's Constructions and Almost Paracomplex Structures}

Let $i:M\to\rr^7$ be an immersion where $M$ is an oriented 6-manifold. Calabi \cite{calabi1958} discovered that $M$ automatically admits an almost complex structure by the following construction. Let us identify $\rr^7$ with purely imaginary octonions and equip it with the 2-fold vector cross product $X$. For each $x\in M$, there is a unique unit normal vector $n_x\in T_{i(x)}\rr^7$ such that $\{n_x,T_xM\}$ is positively oriented. Using properties of $X$, one sees immediately that \[J_x=-X(n_x,\cdot):T_xM\to T_xM\] defines an almost complex structure on $M$. This almost complex structure $J$ is compatible with the pullback metric $g$ from $\rr^7$, and therefore one can cook up an Hermitian form $\omega$, which is shown to be coclosed in \cite{gray1969}. Calabi observed that $(M,g,J)$ actually defines an $SU(3)$-structure therefore its first Chern class (with respect to $J$) vanishes. This fact can be seen from applying our construction in Section 4.1 pointwise to get an explicit nowhere vanishing (3,0)-form on $M$ as well. Calabi also studied the integrability condition for $J$ and proved that if $\Sigma\subset\rr^3$ is a minimal surface, then the above construction for $i:\Sigma\times\rr^4\to\rr^3\times\rr^4\cong\rr^7$ gives an integrable complex structure which leads to very interesting compact complex 3-folds after taking quotients. Using our observations in Section 4, we can refine Calabi's result to the following:
\begin{thm}\label{thm51}
Let $i:M\to \rr^7$ be an immersion of an oriented 6-manifold. If the almost complex structure $J$ defined by Calabi is integrable, then the canonical bundle of $M$ is holomorphically trivial.
\end{thm}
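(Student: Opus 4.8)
The plan is to extract from Calabi's construction a globally defined, nowhere-vanishing $(3,0)$-form $\alpha$ on $M$, and then to show that the integrability of $J$ forces $\alpha$ to be holomorphic; since a nowhere-vanishing holomorphic section of $K_M$ induces a holomorphic isomorphism $\clo_M\cong K_M$, this yields the conclusion. To build $\alpha$ I would apply the construction of Section 4.1 pointwise. Identifying $\rr^7=\ima(\oo)$ with its $2$-fold cross product $X$, let $\varphi^-(x,y,z)=\langle X(x,y),z\rangle\in\clo_7^-$ be the associated constant-coefficient stable $3$-form on $\rr^7$, and let $n_x$ be the oriented unit normal of the immersion at $x$. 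At each point $(i^*\varphi^-)_x=\varphi^-|_{n_x^\perp}$ lies in $\clo_6^-$, and the complex structure it induces agrees with Calabi's $J=-X(n,\cdot)$ (the hypersurface form of Proposition 4.2, cf. Remark 4.3). By Theorem 2.2 and the remark following it, $\Omega^-:=i^*\varphi^-$ is the real part of a canonical decomposable $(3,0)$-form $\alpha=\Omega^-+\sqrt{-1}\,\hat{\Omega}^-$, normalized by $\sqrt{-1}\,\alpha\wedge\bar\alpha>0$; as $n_x$ varies smoothly and $\alpha$ depends algebraically on $\Omega^-$, this $\alpha$ is a smooth, nowhere-vanishing section of $K_M=\wedge^{3,0}T^*M$.

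The key observation I would isolate is that $\Omega^-$ is closed. Indeed $\varphi^-$ has constant coefficients in the standard frame of $\rr^7$, hence $d\varphi^-=0$; since $\Omega^-=i^*\varphi^-$ and pullback commutes with $d$, we obtain $d\Omega^-=i^*(d\varphi^-)=0$. Geometrically this is (half of) the statement that the induced $SU(3)$-structure is half-flat, the companion identity $d(\omega\wedge\omega)=0$ being Gray's coclosedness of $\omega$; but only $d\Omega^-=0$ is needed below.

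Now I would assume $J=J_{\Omega^-}$ is integrable, so $M$ is a complex $3$-fold and $d=\pt+\bpt$. Because $\alpha$ is of type $(3,0)$ on a manifold of complex dimension $3$, we have $\pt\alpha\in\wedge^{4,0}=0$, so $d\alpha=\bpt\alpha$ is of type $(3,1)$; conjugating, $d\bar\alpha=\pt\bar\alpha$ is of type $(1,3)$. Therefore
\[ 0=d\Omega^-=\tfrac12 d(\alpha+\bar\alpha)=\tfrac12\big(\bpt\alpha+\pt\bar\alpha\big), \]
and the two summands, having distinct bidegrees $(3,1)$ and $(1,3)$, must vanish separately. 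Hence $\bpt\alpha=0$, i.e. $\alpha$ is a holomorphic section of $K_M$; being nowhere vanishing, it trivializes $K_M$ holomorphically.

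Most of the content lies in assembling the right objects rather than in any analysis: once $\Omega^-$ is recognized as the pullback of a parallel (hence closed) form, the bidegree argument is immediate. The step I expect to require the most care is the purely structural bookkeeping of Step 1 — verifying that Calabi's $J$ is exactly the $J_{\Omega^-}$ for which $\alpha$ is of type $(3,0)$, and that the normalized $(3,0)$-form $\alpha$ of Theorem 2.2 is a genuinely \emph{global} smooth section (not merely a pointwise one). This rests on the orientation of $M$ being compatible with the complex orientation of $J_{\Omega^-}$ and on the smooth dependence of the constructions of Sections 2 and 4 on $\Omega^-$; I anticipate this to be the one place demanding a careful argument.
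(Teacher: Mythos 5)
Your proposal is correct and follows essentially the same route as the paper's proof: pull back the constant $3$-form $\varphi^-$ to obtain a closed stable $3$-form in $\clo_6^-$ inducing Calabi's $J$, then use the type decomposition of the exterior derivative of the associated decomposable $(3,0)$-form to conclude it is holomorphic and hence trivializes $K_M$. The only cosmetic difference is that you split $\ud\Omega^-=\tfrac12(\bpt\alpha+\pt\bar\alpha)$ by bidegree, whereas the paper notes that $\ud\Omega=\sqrt{-1}\,\ud\Omega_2$ is simultaneously of type $(3,1)$ and real; these are the same argument.
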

\begin{proof}
The cross product on $\ima(\oo)\cong\rr^7$ defines a constant 3-form $\varphi$ by \[\varphi(u,v,w)=\langle X(u,v),w\rangle.\] From what we have seen in Section 4, we know that $\Omega_1=i^*\varphi$ is a closed stable 3-form on $M$ lying in the orbit $\clo_6^-$ pointwise. In addition, the almost complex structure associated with $\Omega_1$ is exactly $J$. By Theorem \ref{thm21}, we know that $\Omega_1$ is the real part of a complex decomposable (3,0)-form $\Omega=\Omega_1+\sqrt{-1}\Omega_2$.

On one hand, as $J$ is integrable, we know that \[\ud\Omega=\ud\Omega_1+\sqrt{-1}\ud\Omega_2=\sqrt{-1}\ud\Omega_2\] is a $(3,1)$-form. On the other hand, it is obvious that $\ud\Omega_2$ is real. The only possibility for this to happen is that $\ud\Omega_2=0$ and therefore $\Omega$ is holomorphic.
\end{proof}
\begin{rmk}\label{rmk52}
This theorem implies that those compact complex 3-folds diffeomorphic to $\Sigma_g\times T^4$ constructed in Section 5 of \cite{calabi1958} are actually non-K\"ahler Calabi-Yau's. They are examples of what is called \emph{special balanced} 3-fold later in this paper, which serve as candidates of internal space in heterotic strings. And it is very clear from the proof that we can replace $\rr^7$ by any 7-manifold with a $G_2$-structure whose fundamental 3-form $\varphi$ is closed, which are also known as $G_2$-structure of type $W_2$. The general relation of $SU(3)$-structures on hypersurfaces of manifolds with $G_2$-structure is studied in great detail in \cite{cabrera2006}. However, it seems that this simple theorem we proved here cannot be derived directly from Cabrera's paper.
\end{rmk}

Gray \cite{gray1969} further generalized Calabi's construction to the case that $i:M\to\bar{M}$ is an oriented (immersed) 6-dimensional submanifold, where $\bar{M}$ is a manifold with vector cross product of dimension 7 or 8. By definition, a manifold with vector cross product is a pseudo-Riemannian manifold $(\bar{M},\langle\cdot,\cdot\rangle)$ such that there is a vector cross product structure defined on each tangent space which varies smoothly. The existence of a vector cross product is equivalent to the condition that the structure group of $(\bar{M},\langle\cdot,\cdot\rangle)$ can be reduced to a subgroup of the automorphism group of corresponding vector cross product on a vector space. Manifolds with vector cross product must be orientable and from now on we always assume that an orientation is already chosen.

Under such assumption, Gray showed that if the restriction of $\langle\cdot,\cdot\rangle$ on the normal bundle $\nu$ of $M$ in $\bar{M}$ is definite\footnote{Gray\cite{gray1969} only considered the positive definite case.}, then there is a natural almost complex structure $J(=J_\nu)$ defined on $M$ which can be obtained by applying Proposition 4.2 pointwise. Applying the construction of $\Omega_{P,a}$ in Proposition 4.2 pointwise, we actually prove the following corollary:

\begin{cor}
If we further assume that $\nu$ as a real rank 2 bundle contains a trivial line bundle, then the first Chern class of $M$ with respect to $J$ is trivial.
\end{cor}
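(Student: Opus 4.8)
The plan is to manufacture a globally defined, nowhere-vanishing complex $(3,0)$-form on $M$ with respect to $J$: any such form is a nowhere-vanishing section of the canonical bundle $K_M=\wedge^{3,0}T^*M$, hence trivializes $K_M$ as a smooth complex line bundle, whence $c_1(K_M)=0$ and therefore $c_1(M,J)=0$. The conceptual point at which the hypothesis enters is the following dichotomy: the almost complex structure $J=J_\nu$ depends only on the oriented normal \emph{plane} $\nu_x$ (which is globally defined for free), whereas the stable form $\Omega_{P,a}$ of Proposition 4.2 genuinely depends on the chosen unit vector $a\in\nu_x$. So a global unit normal section is exactly the extra datum needed to upgrade the pointwise stable form into a global $(3,0)$-form, and this is precisely what a trivial line sub-bundle of $\nu$ furnishes.

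First I would record that, since $\bar M$ and $M$ are oriented, $\nu$ is an oriented real rank-$2$ bundle carrying the induced (definite) metric. For such a bundle the presence of a trivial line sub-bundle is equivalent to the existence of a nowhere-vanishing section; after normalizing I obtain a smooth global unit section $a$ of $\nu$, and the orientation of $\nu_x$ then picks out the orthogonal unit vector $b$ making $\{a,b\}$ positively oriented, so $\{a,b\}$ is in fact a global oriented orthonormal frame.

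Next I would apply Proposition 4.2 fiberwise. Setting $\Omega_1:=\Omega_{\nu_x,a_x}$ at each $x$, the smoothness of $a$ (hence of $b$) makes $\Omega_1$ a smooth $3$-form on $M$ lying in $\clo_6^-$ at every point, with $J_{\Omega_1}=J_\nu=J$. By the structure theory of $\clo_6^-$ recalled in Section $2$, we have $\Omega_1=\re\alpha$ for a decomposable complex $(3,0)$-form $\alpha=\Omega_1+\sqrt{-1}\,\hat{\Omega}_1$ with $\alpha\wedge\bar\alpha\neq0$, where $\hat{\Omega}_1$ is given by the explicit $b$-contraction formula of Proposition 4.2. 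Since the assignments $\Omega_1\mapsto J_{\Omega_1}$ and $\Omega_1\mapsto\hat{\Omega}_1$ are smooth on the open orbit, $\alpha$ is a smooth section of $\wedge^{3,0}T^*M$, and it is nowhere zero because $\Omega_1$ remains inside the open orbit $\clo_6^-$.

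A nowhere-vanishing smooth section of the complex line bundle $K_M$ exhibits $K_M$ as smoothly trivial, so $c_1(K_M)=0$ in $H^2(M;\zz)$ and therefore $c_1(M,J)=0$, proving the corollary. The only item demanding genuine care is the globalization step, namely that the fiberwise construction of Proposition 4.2 depends smoothly on the frame $\{a,b\}$ and that a global such frame exists; once the trivial line sub-bundle supplies the section $a$ there is no further obstruction, and in particular no integrability hypothesis on $J$ is required.
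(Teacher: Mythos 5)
Your argument is correct and is essentially the paper's own proof: the paper likewise uses the trivial line sub-bundle to produce a global unit normal section $a$, applies Proposition 4.2 pointwise to obtain $\Omega_{\nu,a}$, and concludes that the associated complex $(3,0)$-form trivializes the (almost) canonical bundle, forcing $c_1(M,J)=0$. You have merely spelled out the globalization and smoothness details that the paper leaves implicit.
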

\begin{proof}
By assumption, we can pick up a unit normal vector field $a\in\Gamma(M,\nu)$ on $M$ to construct a 3-form $\Omega_{\nu,a}$ on $M$ whose complex form trivializes the (almost) canonical bundle of $M$.
\end{proof}

As an analogue of Calabi and Gray's construction, we can also use manifold with vector cross product to construct 6-dimensional almost paracomplex manifolds.

\begin{prop}\label{prop52}
Let $i:M\to(\bar{M},X,\langle\cdot,\cdot\rangle)$ be an immersion of an oriented 6-manifold $M$ into an 8-manifold $\bar{M}$ with a 3-fold vector cross product $X$ of split signature (4,4). If the restriction of $\langle\cdot,\cdot\rangle$ on the normal bundle $\nu$ is of signature $(1,1)$, then $M$ admits a natural almost paracomplex structure $L$ which is compatible with pullback metric from $\bar{M}$. Precisely, $L_p:T_pM\to T_pM$ is defined by \[L_p(v)=-X(a,b,v),\] where $\{a,b\}$ form an oriented orthogonal basis of $T_pM^\perp$ such that $\|a\|,\|b\|=\pm1$. Construction in Section 4 shows that $M$ admits a 3-form lying in the orbit $\clo_6^+$ pointwise. It reduces the structure group of $M$ to $SL(3,\rr)$, which is the paracomplex cousin of $SU(3)$.
\end{prop}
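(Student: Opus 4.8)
The plan is to establish Proposition~\ref{prop52} by reduction to the model computation already performed in the Riemannian setting, exploiting the transitivity of the relevant structure group. First I would verify that the formula $L_p(v) = -X(a,b,v)$ genuinely defines an endomorphism of $T_pM$, that is, that $X(a,b,v)$ is orthogonal to both $a$ and $b$ and hence lies in $(T_pM^\perp)^\perp = T_pM$. This is immediate from the defining property of the vector cross product, $\langle X(a,b,v),a\rangle = \langle X(a,b,v),b\rangle = 0$. The key algebraic point is then to check that $L_p^2 = \id$ and that the $(\pm1)$-eigenspaces are both $3$-dimensional, so that $L$ is a genuine \emph{almost paracomplex} structure rather than an almost complex one; this is precisely where the signature $(1,1)$ hypothesis on $\nu$ enters, since the sign of $\|a\|^2\|b\|^2$ in the Gram determinant governs whether $L^2 = \pm\id$.

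The main technical engine is the transitivity statement: since $\bar{M}$ carries a $3$-fold vector cross product of split signature $(4,4)$, its structure group reduces to the automorphism group of the split-octonion cross product, and the identity component acts transitively on the relevant space of Lorentzian $2$-planes in the $8$-dimensional model $(\bb,\langle\cdot,\cdot\rangle)$. This is the split-signature analogue of the fact used in Proposition~4.2, where $Spin(7)$ acts transitively on oriented $2$-planes in $\oo$; here $Spin(3,4)$ plays the corresponding role, as already indicated in the paragraph following Remark~4.3 of the excerpt. Using this symmetry, I would reduce the entire computation to the single model plane $P = \spann\{e_0,e_4\}$ with $e_0,e_4$ chosen so that $\langle e_0,e_0\rangle = 1$ and $\langle e_4,e_4\rangle = -1$, realizing the $(1,1)$ signature on $\nu$.

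With the model plane fixed, the heart of the proof is to run the construction of Proposition~4.2 verbatim, but in the split setting: define the $3$-form $\Omega_{P,a}(x,y,z) = -\langle X(x,y,z),a\rangle$ on $P^\perp$ and compute it explicitly in the standard basis. The expected outcome is that $\Omega_{P,a}$ equals the canonical representative of $\clo_6^+$ listed in Theorem~\ref{thm21} (in the paracomplex normal form given in the Remark following it), and that the paracomplex structure $L_{\Omega_{P,a}}$ associated to this form via the operator $K$ of Section~2.1 coincides with $L_p = -X(a,b,\cdot)$. Since elements of $\clo_6^+$ reduce the structure group to $SL(3,\rr)$, this simultaneously yields the almost paracomplex structure, the compatibility with the pullback metric, and the $SL(3,\rr)$-reduction claimed in the statement. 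The compatibility of $L$ with the pullback metric, in the sense $\langle L(x),L(y)\rangle = -\langle x,y\rangle$, should drop out of the same model computation together with the orthogonality properties of $X$.

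The step I expect to be the main obstacle is verifying the transitivity of $Spin(3,4)$ on Lorentzian $2$-planes of the prescribed signature, and more delicately, ensuring that one can consistently choose an \emph{oriented} orthogonal basis $\{a,b\}$ with $\|a\|,\|b\| = \pm1$ across the orbit so that the resulting $\Omega_{P,a}$ lands in a \emph{single} open orbit $\clo_6^+$ rather than flipping between $\clo_6^+$ and $\clo_6^-$. In the compact case of Proposition~4.2 the whole Stiefel-type configuration space is connected and $Spin(7)$ acts transitively, so no orbit-flipping can occur; in the split case the space of Lorentzian $2$-planes may have several connected components, and I would need to check that the sign structure built into the $(1,1)$ hypothesis, together with the orientation of $M$ and the chosen normalization $\|a\|,\|b\| = \pm1$, pins down $\lambda(\Omega_{P,a}) > 0$ uniformly. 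Apart from this connectivity bookkeeping, the remaining verifications are the same routine split-octonion identities already invoked in Section~4, so I would not grind through them in detail.
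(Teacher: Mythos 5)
Your proposal is correct and follows essentially the same route as the paper, which offers no separate proof of Proposition~\ref{prop52} but instead points back to Section~4, where the split case is handled exactly as you describe: replace $Spin(7)$ by $Spin(3,4)$, use transitivity to reduce to the model Lorentzian plane $P=\spann\{e_0,e_4\}$, and verify the canonical form of $\clo_6^+$ by direct computation as in Proposition~4.2. Your extra care about connectivity of the space of pairs $(a,b)$ and the sign of $\lambda(\Omega_{P,a})$ is a reasonable refinement of a point the paper leaves implicit, not a departure from its method.
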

\begin{rmk}
This proposition generalizes the construction of almost paracomplex structure on the pseudosphere \[S^{3,3}:=\{x=(x_1,\dots,x_7)\in\ima(\bb):\langle x,x\rangle=x_1^2+\dots+x_3^2-x_4^2-\dots-x_7^2=-1\}\] by Libermann \cite{libermann1952}.
\end{rmk}

To explore the geometry of such almost paracomplex 6-manifolds, let us introduce a few useful concepts proposed by Gray.

Let $(\bar{M},\langle\cdot,\cdot\rangle)$ be a pseudo-Riemannian manifold with an $r$-fold vector cross product $X$. That is, $X$ is a $(1,r)$-tensor on $\bar{M}$ such that for any point $p\in\bar{M}$, the pair $(\langle\cdot,\cdot\rangle_p,X_p)$ is an $r$-fold vector cross product on $T_p\bar{M}$ in the usual sense. We can define an $(r+1)$-form $\mu$ on $\bar{M}$ by \[\mu(v_1,\dots,v_{r+1}):=\langle X(v_1,\dots,v_r),v_{r+1}\rangle.\] Let $\bar{\nabla}$ be the Levi-Civita connection associated with $\langle\cdot,\cdot\rangle$.

\begin{dfn}
The vector cross product $X$ is called
\begin{enumerate}
\item \emph{parallel}, if $\bar{\nabla}X=0$ or equivalently $\bar{\nabla}\mu=0$;
\item \emph{nearly parallel}, if $(\bar{\nabla}_{v}\mu)(v,v_1,\dots,v_r)=0$ for any vector fields $v,v_1,\dots,v_r$;
\item \emph{almost parallel}, if $\ud\mu=0$;
\item \emph{semi-parallel}, if $\delta\mu=0$, where $\delta$ is the codifferential.
\end{enumerate}
\end{dfn}

Now let $(\bar{M},X,\langle\cdot,\cdot\rangle$ be a 7-manifold with a 2-fold vector cross product or an 8-manifold with a 3-fold vector cross product. Let $M$ be an oriented 6-manifold and $i:M\to\bar{M}$ be an immersion such that the induced metric on the normal bundle $\nu$ of $M$ in $\bar{M}$ is definite. We have seen that there is a natural almost complex structure $J$ on $M$ which is compatible with the pullback metric $i^*\langle\cdot,\cdot\rangle$. Let $\nabla$ be the Levi-Civita connection on $M$ with respect to the pullback metric, and let $A$ be the configuration tensor, i.e., \[A(v,w)=\bar{\nabla}_vw-\nabla_vw\] for any $v,w$ vector fields on $M$ and \[A(v,n)=\textrm{tangent part of }\bar{\nabla}_vn\] for any tangent vector field $v$ and normal vector field $n$. Among many interesting results proved by Gray, we highlight the following two theorems:

\begin{thm}
(\cite{gray1969}, Theorem 6.5)\\
Suppose $X$ is parallel, then $J$ is integrable if and only if \[A(v,v)+A(Jv,Jv)=0\] for any vector field $v$ on $M$. As a corollary, integrability of $J$ implies that $M$ is a minimal submanifold of $\bar{M}$.
\end{thm}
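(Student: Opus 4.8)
The plan is to reduce integrability of $J$, encoded by the vanishing of its Nijenhuis tensor $N$, to an explicit algebraic condition on the configuration tensor $A$, exploiting the two structural hypotheses: that the ambient cross product $X$ is parallel ($\bnabla X=0$) and that it is totally skew-symmetric. I would work out the $7$-dimensional (2-fold) case first, where $Jv=-X(n,v)$ for the oriented unit normal field $n$; the $8$-dimensional (3-fold) case, with $Jv=-X(a,b,v)$, is entirely parallel and I will indicate the only changes.

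First I would compute $\nabla J$. Extending $n$ and a tangent field $v$ and differentiating $Jv=-X(n,v)$ by the Leibniz rule, the hypothesis $\bnabla X=0$ kills the $(\bnabla_w X)(n,v)$ term and leaves $\bnabla_w(Jv)=-X(\bnabla_w n,v)-X(n,\bnabla_w v)$. I then decompose via Gauss--Weingarten: $\bnabla_w v=\nabla_w v+A(w,v)$ with $A(w,v)$ normal, and $\bnabla_w n=A(w,n)$ tangent (since $|n|=1$ forces $\bnabla_w n\perp n$). Here $-X(n,\nabla_w v)=J(\nabla_w v)$ is tangential, while $-X(n,A(w,v))$ vanishes because $A(w,v)$ is a multiple of $n$ and skew-symmetry gives $X(n,n)=0$. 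Projecting onto $TM$ thus yields the clean formula
$$(\nabla_w J)v=-\big(X(A(w,n),v)\big)^{\top}.$$
Writing $S(w):=A(w,n)$ for the self-adjoint shape operator, this reads $(\nabla_w J)v=-(X(S(w),v))^{\top}$. In the $8$-dimensional case the same two facts apply: $\bnabla X=0$ removes the derivative-of-$X$ term, and the normal-connection contributions drop out because $X(a,a,\cdot)=X(b,b,\cdot)=0$, giving $(\nabla_w J)v=-(X(S_a(w),b,v)+X(a,S_b(w),v))^{\top}$ with $S_a,S_b$ the shape operators of the two normal directions.

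Next I would substitute this into the torsion-free expression for the Nijenhuis tensor,
$$N(u,w)=(\nabla_{Ju}J)w-(\nabla_{Jw}J)u-(\nabla_w J)(Ju)+(\nabla_u J)(Jw),$$
so that all four terms become tangential projections of $X(S(\cdot),\cdot)$ and $N$ is expressed purely through $S$, $J$, and $X$. The heart of the argument, and the step I expect to be most delicate, is to collapse these cross-product expressions using the algebraic identities of the octonionic (resp.\ split-octonionic) cross product recalled in Section~3 --- the double cross-product identity $X(a,X(a,b))=\langle a,b\rangle a-\langle a,a\rangle b$ and its polarizations, together with the fact that $\langle X(s,t),n\rangle$ for tangent $s,t$ is controlled by $J$ and $\langle\cdot,\cdot\rangle$. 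After this reduction each purely tangential cross-product term should re-express in terms of $S$, $J$, and the induced metric, the totally antisymmetric pieces should cancel, and $N(u,w)$ should come out proportional to the part of $S$ that anti-commutes with $J$. The upshot is that $N\equiv 0$ if and only if $JS+SJ=0$, which, by self-adjointness of $S$ and symmetry of $A$, is equivalent after polarization to $A(v,v)+A(Jv,Jv)=0$ for all $v$. The bookkeeping of signs and the correct invocation of the cross-product identities is where the real work lies.

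Finally, the corollary follows by tracing. Choosing a $J$-adapted orthonormal frame $e_1,Je_1,e_2,Je_2,e_3,Je_3$, the mean curvature vector is
$$H=\sum_{\alpha}A(e_\alpha,e_\alpha)=\sum_{i=1}^{3}\big(A(e_i,e_i)+A(Je_i,Je_i)\big),$$
which vanishes term by term under the integrability condition. Hence $H=0$ and $M$ is a minimal submanifold of $\bar M$.
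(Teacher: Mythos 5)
The paper never proves this statement: it is quoted directly from Gray (\cite{gray1969}, Theorem 6.5), and the only argument the paper actually writes down is the three-step sketch for the paracomplex analogue, Theorem \ref{thm58}, which follows Gray's Section 6. Measured against that sketch, your overall route is the same one: differentiate $Jv=-X(n,v)$ (resp.\ $Jv=-X(a,b,v)$) using $\bnabla X=0$ and Gauss--Weingarten, observe that the terms involving $X$ with a repeated or purely normal slot vanish by skew-symmetry, and reduce integrability to the condition that the shape operator anticommutes with $J$. Phrasing this through $\nabla J$ and the Nijenhuis tensor rather than through $\nabla\omega$ and the criterion $(\nabla_x\omega)(y,z)+(\nabla_{Jx}\omega)(Jy,z)=0$ (the paper's Step 1) is an equivalent bookkeeping choice, since $\omega(x,y)=\langle Jx,y\rangle$. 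Your preliminary formula $(\nabla_wJ)v=-\big(X(S(w),v)\big)^{\top}$, your torsion-free expression for $N$, and the final trace argument for minimality are all correct.

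The genuine gap is that the step you defer \emph{is} the theorem. You assert that after substituting $\nabla J$ into $N$ the ``totally antisymmetric pieces should cancel'' and that $N$ ``should come out proportional to'' the $J$-anticommuting part of $S$, and you explicitly acknowledge that the sign bookkeeping is where the real work lies --- but nothing in the proposal verifies this, and the generic identity $X(a,X(a,b))=\langle a,b\rangle a-\langle a,a\rangle b$ is not the tool that does it: the four terms of $N$ involve $X$ evaluated on two independent tangent arguments and one shape-operator image, not iterated cross products of a single vector. What is actually needed is the family of identities recorded in Step 2 of the paper's proof of Theorem \ref{thm58} (in their complex form), e.g.\ $X(Jx,y,n)-JX(x,y,n)=\langle Jx,y\rangle n-\langle x,y\rangle Jn$ together with the commutation rule for $X(n,\cdot,\cdot)$ against $J$ --- a rule which, as the paper notes, takes one of two different forms depending on whether the ambient product is $X_1$ or $X_2$, so even the sign pattern of your intermediate formulas is case-dependent. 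Without stating and applying these identities you have not shown that $N$ equals (a nonzero multiple of) the anticommutator part of $S$ rather than merely containing it, so neither direction of the ``if and only if'' is established. The proposal is a correct plan whose decisive computation is missing.
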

\begin{rmk}
The condition $A(v,v)+A(Jv,Jv)=0$ actually indicates that $M$ is an austere submanifold of $\bar{M}$ in the sense of \cite{harvey1982}.
\end{rmk}
\begin{thm}
(\cite{gray1969}, Theorem 6.8)\\
If $X$ is semi-parallel, then $M$ is semi-K\"ahler, i.e., the Hermitian form $\omega$ associated to $J$ is coclosed, or equivalently in this case \[\ud(\omega^2)=0.\]
\end{thm}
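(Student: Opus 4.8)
The plan is to treat both cases---a $7$-manifold with a $2$-fold cross product and an $8$-manifold with a $3$-fold cross product---simultaneously by realizing the Hermitian form as a contraction of $\mu$ with the normal directions. Let $\{e_1,\dots,e_6\}$ be a local orthonormal frame of $TM$ and let $N$ denote the normal data, i.e.\ a single unit normal $n$ in the $7$-dimensional case and an oriented orthonormal pair $\{a,b\}$ spanning $\nu$ in the $8$-dimensional case. In either situation the defining relation $J=-X(n,\cdot)$ (resp.\ $J=-X(a,b,\cdot)$) yields $\omega=-\iota_N\mu|_{TM}$, that is $\omega(x,y)=-\mu(n,x,y)$ or $\omega(x,y)=-\mu(a,b,x,y)$. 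Since the equivalence $\delta\omega=0\Leftrightarrow\ud(\omega^2)=0$ is already built into the statement (it follows from $*\omega=\tfrac12\omega^2$ in six dimensions), it suffices to prove that $\omega$ is coclosed.

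The heart of the argument is to differentiate $\omega$ with the Gauss--Weingarten formulae and then feed in the hypothesis $\delta\mu=0$. Writing $\bnabla_{e_i}x=\nabla_{e_i}x+A(e_i,x)$ with $A(e_i,x)$ normal, and splitting $\bnabla_{e_i}$ of each normal vector into its normal part (the normal connection $\nabla^{\perp}$) and its tangential part (the shape operator $A(e_i,\cdot)$), I would expand $(\nabla_{e_i}\omega)(x,y)$ by Leibniz. Two families of terms drop out for purely algebraic reasons: the normal-connection contributions vanish because they insert a repeated normal vector into the alternating form $\mu$ (e.g.\ $\mu(n,n,\cdot)=0$, or $\mu(c_i b,b,\cdot,\cdot)=0$ in the rank-two case), and the terms feeding $A(e_i,x)$ or $A(e_i,y)$ into a remaining slot vanish because $\mu$ already carries all the normal directions. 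This is precisely the computation that also shows $\omega$ is independent of the oriented orthonormal normal frame. What survives is the clean identity
\[(\nabla_{e_i}\omega)(x,y)=-(\bnabla_{e_i}\mu)(N,x,y)-[\text{shape-operator terms}],\]
where the shape-operator terms are $\mu(A(e_i,n),x,y)$ in dimension $7$ and $\mu(A(e_i,a),b,x,y)+\mu(a,A(e_i,b),x,y)$ in dimension $8$, and every $A(e_i,\cdot)$ occurring there is now tangent to $M$.

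Finally I would contract $x=e_i$ and sum against the metric to form $(\delta\omega)(y)=-\sum_i\epsilon_i(\nabla_{e_i}\omega)(e_i,y)$, where $\epsilon_i=\langle e_i,e_i\rangle$. The first group reassembles, up to sign and the vanishing of the purely normal contributions, into $(\delta\mu)(N,y)$---that is $(\delta\mu)(n,y)$ or $(\delta\mu)(a,b,y)$---which is zero by the semi-parallel hypothesis. For the shape-operator terms I would expand $A(e_i,\cdot)$ in the tangent frame: since $A$ is the second fundamental form, the coefficients $h_{ij}=\langle A(e_i,e_j),\cdot\rangle$ are symmetric in $i,j$, whereas $\mu(e_j,\dots,e_i,\dots)$ is antisymmetric under $i\leftrightarrow j$, so the contracted sum of a symmetric against an antisymmetric object vanishes. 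Hence $(\delta\omega)(y)=0$ for every $y$, and $M$ is semi-K\"ahler. The main obstacle is not conceptual but a matter of careful bookkeeping: managing the pseudo-Riemannian signs $\epsilon_i$ throughout (the definiteness of $\nu$ is what guarantees the adapted frame exists and that $X(n,\cdot)$, resp.\ $X(a,b,\cdot)$, sends $TM$ to $TM$), executing the precise index gymnastics that identify the surviving $\bnabla\mu$-sum with $\delta\mu$ evaluated on the normal directions, and confirming that the normal-connection terms genuinely cancel in the rank-two case. Once these are pinned down, the symmetric-times-antisymmetric vanishing of the shape-operator contribution makes the implication essentially automatic.
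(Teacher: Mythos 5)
Your proof is correct. The paper gives no argument of its own here---the statement is quoted directly from Gray (Theorem 6.8), and even the paracomplex analogue that follows is dispatched with ``the proof is identical\dots see \cite{gray1969}''---and your reconstruction (writing $\omega=-\iota_N\mu|_{TM}$, expanding $\nabla\omega$ via Gauss--Weingarten so that the normal-connection and $A(e_i,x)$ insertions die by antisymmetry of $\mu$, identifying the surviving trace with $\delta\mu(N,\cdot)=0$, and killing the shape-operator terms by contracting the symmetric second fundamental form against the alternating $\mu$) is exactly the standard argument the paper is deferring to.
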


We would like to prove the paracomplex analogues of these two theorems. This is something new because a paracomplex structure is not a 1-fold vector cross product in the setting that Gray was working with. No surprise, we have:

\begin{thm}\label{thm58}
Under the assumption of Proposition \ref{prop52}, if we further assume that the vector cross product $X$ is parallel, then the almost paracomplex structure $L$ is integrable if and only if the configuration tensor $A$ satisfies \[A(v,v)-A(Lv,Lv)=0\] for any tangent vector field $v$. In particular, integrability of $L$ implies that $M$ is a minimal submanifold of $\bar{M}$.
\end{thm}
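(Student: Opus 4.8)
The plan is to follow Gray's proof of his Theorem 6.5 in the paracomplex setting, the only genuinely new ingredients being the sign change in the Nijenhuis tensor coming from $L^2=\id$ (rather than $J^2=-\id$) and the null structure of the eigenbundles of $L$. First I would recall that an almost paracomplex structure is integrable if and only if its Nijenhuis tensor
\[N_L(u,v)=[Lu,Lv]+[u,v]-L[Lu,v]-L[u,Lv]\]
vanishes; note the $+[u,v]$, which is the paracomplex analogue of the $-[u,v]$ occurring for an almost complex structure. Since the induced Levi-Civita connection $\nabla$ on $M$ is torsion-free, I would rewrite this as
\[N_L(u,v)=(\nabla_{Lu}L)v-(\nabla_{Lv}L)u-L\big((\nabla_u L)v\big)+L\big((\nabla_v L)u\big),\]
so that the whole problem reduces to computing the tensor $(\nabla_u L)v$ on $M$.

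The key computation is the formula for $\nabla L$. Extending $\{a,b\}$ to a local orthogonal normal frame with $\langle a,a\rangle,\langle b,b\rangle=\pm1$, I would differentiate $Lv=-X(a,b,v)$ and use the hypothesis $\bar\nabla X=0$ together with the Gauss--Weingarten decompositions $\bar\nabla_w a=A(w,a)+(\text{normal})$, $\bar\nabla_w b=A(w,b)+(\text{normal})$, and $\bar\nabla_w v=\nabla_w v+A(w,v)$. Because $X$ is alternating, the terms coming from the normal connection produce $X(b,b,v)$ and $X(a,a,v)$, which vanish, while the second-fundamental-form term gives $X(a,b,A(w,v))=0$ since $A(w,v)\in\spann\{a,b\}$. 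Taking tangential components then yields the clean identity
\[(\nabla_w L)v=\big(-X(A(w,a),b,v)-X(a,A(w,b),v)\big)^{\top},\]
which depends only on the second fundamental form. This is structurally identical to the expression appearing in Gray's complex case; the only difference will be an extra sign, traced to the compatibility relation $\langle Lx,Ly\rangle=-\langle x,y\rangle$, when converting between the Weingarten maps and the second fundamental form.

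Next I would substitute this formula into the expression for $N_L$ and simplify the four resulting cross-product terms, using the alternating property and the identity $\langle X(u,b,v),a\rangle=-\langle Lu,v\rangle$ (obtained from total skew-symmetry of the associated $4$-form and $X(a,u,b)=Lu$) together with its $a\leftrightarrow b$ companion to evaluate the tangential projections. The terms assemble exactly as in Gray's computation, but with the compatibility sign reversed, so that $N_L\equiv 0$ collapses to the condition
\[A(v,v)-A(Lv,Lv)=0 \quad\text{for all tangent } v,\]
in place of Gray's $A(v,v)+A(Jv,Jv)=0$. I expect this reduction---keeping track of which of the four terms survive and reconciling all the signs in split signature---to be the main obstacle; it is in essence bookkeeping, but the indefinite metric makes the signs delicate, so I would organize it by reducing to the algebraic model $P=\spann\{e_0,e_4\}$ provided by Proposition \ref{prop52}.

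Finally, for minimality I would decompose $v=v^++v^-$ along the $(\pm1)$-eigenbundles $T^{\pm}$ of $L$. Since $A$ is symmetric, $A(v,v)-A(Lv,Lv)=4A(v^+,v^-)$, so the integrability condition is equivalent to the vanishing of $A$ on mixed pairs, i.e.\ $A|_{T^+\times T^-}=0$. The compatibility $\langle Lx,Ly\rangle=-\langle x,y\rangle$ forces each $T^{\pm}$ to be totally isotropic of dimension $3$, with the induced metric of signature $(3,3)$ pairing them nondegenerately; choosing dual null bases $\{f_i\}\subset T^+$ and $\{g_j\}\subset T^-$ with $\langle f_i,g_j\rangle=\delta_{ij}$, the mean curvature is $\tfrac12\,\tr_g A=\sum_i A(f_i,g_i)=0$. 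Hence $M$ is a minimal submanifold of $\bar M$, which completes the argument.
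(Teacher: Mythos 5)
Your proposal is correct and follows essentially the same route as the paper: both adapt Gray's Section 6 argument, using $\bar\nabla X=0$ and the Gauss--Weingarten equations to express the integrability obstruction of $L$ through the configuration tensor $A$, with the paper phrasing the criterion via $(\nabla_x\omega)(y,z)+(\nabla_{Lx}\omega)(Ly,z)=0$ and you phrasing the metrically equivalent version via the Nijenhuis tensor and $\nabla L$. The one genuine addition on your side is the explicit minimality argument through the null eigenbundles $T^{\pm}$ and the identity $A(v,v)-A(Lv,Lv)=4A(v^+,v^-)$, a step the paper asserts but does not write out.
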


\begin{proof}
The proof is along the lines of \cite{gray1969}, Section 6. So we will only sketch the proof and write down the corresponding formulae with signs modified.
\begin{itemize}
\item Step 1: $L$ is integrable if and only if \[(\nabla_x\omega)(y,z)+(\nabla_{Lx}\omega)(Ly,z)=0\] for any vector fields $x,y,z$. Where $\omega(x,y)=\langle Lx,y\rangle$ is the associated Hermitian form.
\item Step 2: We extend $L$ to the normal bundle $\nu$ by assigning $Ln$ to be the unit normal vector orthogonal to $n$ such that $\{n,Ln\}$ is positively oriented, assuming $n$ is also a unit normal vector. It is easy to verify that $L$ is a linear map that squares to identity. In addition, the following identities hold: \[\begin{split}&X(Lx,y,n)-LX(x,y,n)=\langle Lx,y\rangle n-\langle x,y\rangle Ln,\\ &X(Lx,Ly,n)-X(x,y,n)=2\langle Lx,y\rangle Ln.\end{split}\] Furthermore, exactly one of the following equations holds (depending on the kind of 3-fold cross vector product on $\bar{M}$), \[\begin{split}&LX(n,x,y)=X(Ln,x,y),\\ &LX(n,x,y)=-X(Ln,x,y)+2\langle Lx,y\rangle n.\end{split}\]
\item Step 3: Since $X$ is parallel, we have \[(\nabla_x\omega)(y,z)=\langle X(n,A(x,Ln)\pm LA(x,n),y),z\rangle.\] Using Step 1, integrability condition of $L$ is equivalent to that \[L(A(x,y)-A(Lx,Ly))\mp(A(x,Ly)-A(Lx,y))=0\] for any tangent vector fields $x,y$, which is equivalent to the condition given in the theorem.
\end{itemize}
\end{proof}

\begin{ex}
From the Cayley-Dickson construction, we can identify $\bb$ with $\hh\oplus\hh\cdot l\cong\rr^4\oplus\rr^4$, where the the restricted metric on $\hh$ and $\hh\cdot l$ are positive definite and negative definite respectively. Let $M=M_1\times M_2\subset\rr^4\times\rr^4$ be the product of two oriented hypersurfaces of $\rr^4$. It is easy to check that the condition of Proposition \ref{prop52} holds, therefore we get two almost paracomplex structures $L_1$ and $L_2$ on $M$. One can show that neither $L_1$ nor $L_2$ is integrable, unless both $M_1$ and $M_2$ are (part of) hyperplanes. In fact, $L_i$ switch the tangent vectors of $M_1$ and $M_2$. However, as a product space, $M_1\times M_2$ admits a standard paracomplex structure $L$. That is, $L$ acts on $M_1$ as identity and acts on $M_2$ as minus identity. Such $L$ is automatically integrable and it actually defines a para-Calabi-Yau structure, see Section 6.
\end{ex}

\begin{thm}
Under the assumption of Proposition \ref{prop52} and assuming that the vector cross product $X$ is semi-parallel, the Hermitian form $\omega$ is always coclosed.
\end{thm}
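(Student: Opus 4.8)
The plan is to follow the structure of Gray's proof of his Theorem 6.8, differentiating the Hermitian form $\omega$, splitting its covariant derivative into an intrinsic piece governed by $\bnabla\mu$ and an extrinsic piece governed by the second fundamental form, and then contracting to read off $\delta\omega$. First I would record the pointwise identity $\omega(y,z)=-\mu(a,b,y,z)$, immediate from $Lv=-X(a,b,v)$ and the definition of $\mu$, where $\{a,b\}$ is a local oriented pseudo-orthonormal frame of the normal bundle $\nu$ (of signature $(1,1)$, so $\langle a,a\rangle=-\langle b,b\rangle=\pm1$). Differentiating along a tangent field $x$ and using the Gauss--Weingarten decompositions $\bnabla_x y=\nabla_x y+A(x,y)$ and $\bnabla_x a=A(x,a)+\nabla^\perp_x a$ (and similarly for $b$), I expect to arrive at
\[(\nabla_x\omega)(y,z)=-(\bnabla_x\mu)(a,b,y,z)-\mu(A(x,a),b,y,z)-\mu(a,A(x,b),y,z).\]
Two cancellations make this clean: the second fundamental form terms in the last two slots drop because $A(x,y),A(x,z)$ are normal, so $\mu(a,b,A(x,y),z)=0$ as three arguments then lie in the $2$-dimensional bundle $\nu$; and the normal-connection terms $\nabla^\perp_x a$, $\nabla^\perp_x b$ drop because they are proportional to $b$, $a$ respectively, again forcing a repeated normal argument in $\mu$.

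Next I would contract, writing $(\delta\omega)(z)=-\sum_i\epsilon_i(\nabla_{e_i}\omega)(e_i,z)$ for a local orthonormal tangent frame $\{e_i\}$ with $\epsilon_i=\langle e_i,e_i\rangle$ (the induced metric on $M$ being neutral of signature $(3,3)$). For the intrinsic term I would extend $\{e_i\}$ to the orthonormal frame $\{e_i\}\cup\{a,b\}$ of $T\bar M$ and observe that, since $\mu$ is alternating, the normal-frame contributions to $(\delta\mu)(a,b,z)=-\sum_j\epsilon_j(\bnabla_{E_j}\mu)(E_j,a,b,z)$ vanish, leaving exactly $\sum_i\epsilon_i(\bnabla_{e_i}\mu)(a,b,e_i,z)$ up to sign. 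Hence the intrinsic contribution to $\delta\omega$ equals $-(\delta\mu)(a,b,z)$, which vanishes precisely by the semi-parallel hypothesis $\delta\mu=0$.

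The heart of the argument is to show that the two extrinsic contributions $\sum_i\epsilon_i\,\mu(A(e_i,a),b,e_i,z)$ and $\sum_i\epsilon_i\,\mu(a,A(e_i,b),e_i,z)$ vanish identically, and here I would use a trace/symmetry argument in place of the cross-product identities from Step 2 of the proof of Theorem \ref{thm58}. For fixed $b$ and $z$ the bilinear form $\beta(u,v):=\mu(u,b,v,z)$ is antisymmetric in $u,v$ (they occupy slots $1$ and $3$ of the alternating $\mu$), so the operator $T$ defined by $\langle Tu,v\rangle=\beta(u,v)$ is skew-adjoint; on the other hand the shape operator $S_a(x):=A(x,a)$ is self-adjoint, since $\langle S_ax,y\rangle=-\langle A(x,y),a\rangle$ is symmetric in $x,y$. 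Because the trace of a skew-adjoint composed with a self-adjoint operator vanishes, $\tr(TS_a)=\tr((TS_a)^*)=\tr(-S_aT)=-\tr(TS_a)$, we obtain $\sum_i\epsilon_i\,\mu(A(e_i,a),b,e_i,z)=\tr(TS_a)=0$, and the same reasoning disposes of the second term. Combining the three pieces gives $\delta\omega=0$, equivalently $\ud(\omega^2)=0$ since $\ast\omega=\tfrac12\omega^2$ up to sign on a neutral $6$-manifold.

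The main obstacle I anticipate is purely bookkeeping: fixing the signs in the Gauss--Weingarten expansion and in the codifferential for the indefinite metric, and checking that the trace-vanishing step survives the indefiniteness of $\langle\cdot,\cdot\rangle$ (it does, since $\tr T^*=\tr T$ holds for adjoints with respect to any nondegenerate symmetric form). Notably, unlike the integrability criterion of Theorem \ref{thm58}, the two inequivalent split cross products $X_1,X_2$ should yield the same conclusion here, because the extrinsic terms are killed by the generic trace argument and never invoke the sign-sensitive identities that distinguish $X_1$ from $X_2$.
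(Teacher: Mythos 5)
Your proof is correct, and it follows the route the paper intends: the paper's own ``proof'' is a one-line deferral to Gray's Theorem 6.8 with the remark that the argument is identical in the paracomplex setting, whereas you actually carry out the adaptation. Your decomposition $(\nabla_x\omega)(y,z)=-(\bnabla_x\mu)(a,b,y,z)-\mu(A(x,a),b,y,z)-\mu(a,A(x,b),y,z)$ is right (the terms with $A(x,y)$, $A(x,z)$ and with $\nabla^\perp_xa$, $\nabla^\perp_xb$ do vanish for the reasons you give, and $\mu(a,b,\cdot,\cdot)$ is invariant under the $SO(1,1)$ frame change since $\cosh^2t-\sinh^2t=1$, so the local frame causes no trouble), and the identification of the contracted intrinsic term with $-(\delta\mu)(a,b,z)$ is exactly where the semi-parallel hypothesis enters. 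The one place you genuinely depart from Gray is in disposing of the extrinsic terms: Gray manipulates the cross-product identities, while your observation that each term is $\tr(TS)$ for $T$ skew-adjoint and $S$ self-adjoint with respect to the (indefinite) induced metric is cleaner, works uniformly in any signature, and makes it transparent that the conclusion is insensitive to whether the split cross product is $X_1$ or $X_2$ --- a point the sign-sensitive identities of Theorem \ref{thm58} obscure. The only cosmetic caveat is the final equivalence with $\ud(\omega^2)=0$, where the Hodge-star identity $*\omega=\pm\tfrac12\omega^2$ on a $(3,3)$ para-Hermitian $6$-manifold deserves the sign check you already flag; the substantive claim $\delta\omega=0$ is fully established.
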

\begin{proof}
The proof is identical in the almost complex case, see \cite{gray1969}.
\end{proof}

From now on in this subsection, we shall consider the simplest case in the light of Proposition \ref{prop52}, i.e., $\bar{M}$ is the imaginary split-octonions $\ima(\bb)$ with the standard 2-fold cross product, $M$ is an oriented immersed hypersurface of $\ima(\bb)$ such that its normal is always time-like. We would like to construct interesting examples of paracomplex 6-manifolds of this form. Notice that such manifold must be non-compact.

\begin{rmk}
The almost Hermitian geometry of oriented hypersurfaces $M$ in $\ima(\oo)$ (or arbitrary 7-manifold $\bar{M}$ with a $G_2$-structure) and $\oo$ is well-studied, see \cite{calabi1958}, \cite{yano1964}, \cite{gray1966}, \cite{gray1969b}, \cite{cabrera2006} and the series papers by Banaru and Kirichenko, such as \cite{kirichenko1973, kirichenko1980, banaru1994, banaru2002} etc.. In particular, the condition for the almost Hermitian manifold $M$ to be of certain Gray-Hervella class \cite{gray1980} has been worked out. The representation theoretic classification of almost para-Hermitian manifolds is available in \cite{gadea1991}, so we may expect that most results on almost Hermitian geometry of $M$ also carry over to oriented hypersurfaces in $\ima(\bb)$ and $\bb$. However, we will not pursue this direction here.
\end{rmk}

There is another Cayley-Dickson construction of $\bb$ we shall describe. Let $\uu$ be the space of split-quaternion numbers, that is, a 4-dimensional real associative algebra with basis $1,i,j,k$ satisfying\[i^2=-1,~j^2=1,~k=ij=-ji.\] Any element of $\bb$ can be uniquely represented by $a+bl$ for $a,b\in\uu$, and the multiplication rule is exactly the formula given in Section 3, where $l^2$ can be either 1 or -1. With this in mind, we have the identification $\ima(\bb)=\ima(\uu)\oplus\uu\cdot l\cong\rr^{1,2}\oplus\rr^{2,2}$. Let $i:\Sigma\to\rr^{1,2}$ be an oriented immersed Lorentzian surface, that is, we require the induced metric on $\Sigma$ has signature $(1,1)$. Then by Proposition \ref{prop52}, the 6-manifold $M=\Sigma\times\rr^{2,2}$ has an almost paracomplex structure $L$ defined by 2-fold vector cross product on it. As an analogue of Theorem 6 of \cite{calabi1958}, we have
\begin{thm}
The almost paracomplex structure $L$ on $M=\Sigma\times\rr^{2,2}$ is integrable if and only if $\Sigma$ is an oriented minimal Lorentzian surface in $\rr^{1,2}$.
\end{thm}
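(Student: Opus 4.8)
The plan is to realize $M=\Sigma\times\rr^{2,2}$ as a codimension-two submanifold of the split-octonions $\bb$ and to apply Theorem \ref{thm58} directly, rather than re-deriving an integrability criterion for the $2$-fold product on $\ima(\bb)$. In the split-quaternion Cayley--Dickson model $\bb=\uu\oplus\uu l$ we have $\ima(\bb)=\ima(\uu)\oplus\uu l\cong\rr^{1,2}\oplus\rr^{2,2}$ and $\bb=\uu\oplus\uu l\cong\rr^{4,4}$, with the real axis $\rr e_0\subset\uu$ orthogonal to $\ima(\bb)$ and space-like. Since $\bb$ carries a constant metric and a constant $3$-fold cross product $X'$, it is flat and $X'$ is parallel. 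The surface normal $n$ of $\Sigma$ in $\rr^{1,2}=\ima(\uu)$ is time-like (as $\Sigma$ is Lorentzian inside a space of signature $(1,2)$), so the normal bundle of $M$ in $\bb$ is spanned by $\{n,e_0\}$, has signature $(1,1)$, and contains the trivial line $\rr e_0$. This is exactly the setting of Proposition \ref{prop52} and Theorem \ref{thm58}, and the structure $L$ built from the $2$-fold product $X$ on $\ima(\bb)$ via $L(v)=-X(n,v)=-X'(e_0,n,v)$ agrees, up to the orientation convention fixing the sign, with the one produced by Proposition \ref{prop52} from the normal frame $\{e_0,n\}$. Theorem \ref{thm58} then asserts that $L$ is integrable if and only if $A(v,v)-A(Lv,Lv)=0$ for every tangent field $v$.

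First I would pin down the two block structures that make this criterion explicit. Because the $\rr^{2,2}=\uu l$ factor and the $e_0$ direction are flat and totally geodesic in $\bb$, the configuration tensor $A$ vanishes unless both arguments are tangent to $\Sigma$, where it reduces to the second fundamental form $\mathrm{II}=h\cdot n$ of $\Sigma$ in $\rr^{1,2}$ with $h$ a scalar bilinear form. Next, a short Cayley--Dickson computation shows that $L$ preserves the splitting $TM=T\Sigma\oplus\uu l$: for $wl\in\uu l$ one finds $X(n,wl)=(wn)l\in\uu l$, while for $v\in\ima(\uu)$ one has $X(n,v)=\ima(nv)\in\ima(\uu)$, which is moreover orthogonal to $n$ and hence lands in $T\Sigma$. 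On the Lorentzian plane $T\Sigma$, $L$ is a paracomplex structure compatible with the induced metric in the sense $\langle Lx,Ly\rangle=-\langle x,y\rangle$; in a pseudo-orthonormal basis $\{e,f\}$ with $\langle e,e\rangle=1$, $\langle f,f\rangle=-1$ it therefore acts (up to sign) by $Le=f$ and $Lf=e$.

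With these two facts the criterion collapses onto $\Sigma$: writing $v=v_1+v_2$ with $v_1\in T\Sigma$ and $v_2\in\uu l$, the vanishing of $A$ off $T\Sigma$ gives $A(v,v)=A(v_1,v_1)$ and $A(Lv,Lv)=A(Lv_1,Lv_1)$, so integrability is equivalent to $h(v_1,v_1)=h(Lv_1,Lv_1)$ for all $v_1\in T\Sigma$. Expanding $v_1=\alpha e+\beta f$ and using $Lv_1=\beta e+\alpha f$, the difference equals $(\alpha^2-\beta^2)\bigl(h(e,e)-h(f,f)\bigr)$, so the condition is precisely $h(e,e)=h(f,f)$. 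Since the mean curvature of the Lorentzian surface $\Sigma$ in $\rr^{1,2}$ is the metric trace $\tr_g h=h(e,e)-h(f,f)$, this is exactly the minimality of $\Sigma$, completing the equivalence.

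The step demanding the most care is not the geometry but the bookkeeping of signs: fixing compatible orientations on the normal frame $\{e_0,n\}$ and on $T\Sigma$ so that the $L$ produced by Proposition \ref{prop52} matches $L=-X(n,\cdot)$, and then tracking the sign in the compatibility $\langle Lx,Ly\rangle=-\langle x,y\rangle$ through the expansion. A wrong sign would flip the integrability condition to $h(e,e)+h(f,f)=0$, a genuinely different and non-minimal condition, so the identification of $A(v,v)-A(Lv,Lv)=0$ with $\tr_g h=0$ must be checked against the conventions of Theorem \ref{thm58} rather than merely asserted; the Cayley--Dickson identity $X(n,wl)=(wn)l$ together with $X(n,v)=\ima(nv)$ for $v\in\ima(\uu)$ are the concrete inputs that nail down these signs.
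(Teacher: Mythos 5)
Your proof is correct and follows exactly the route the paper intends: the theorem is stated without an explicit proof, but its placement immediately after Proposition \ref{prop52} and Theorem \ref{thm58} makes clear that the argument is to realize $\Sigma\times\rr^{2,2}$ as a codimension-two submanifold of $\bb$ with normal plane $\spann\{e_0,n\}$ of signature $(1,1)$ and reduce the criterion $A(v,v)-A(Lv,Lv)=0$ to the vanishing of the Lorentzian trace of the second fundamental form of $\Sigma$, which is precisely what you do. The two points genuinely requiring verification --- that $L$ preserves the splitting $T\Sigma\oplus\uu l$ (unlike in the $M_1\times M_2\subset\rr^4\times\rr^4$ example, where $L$ swaps factors) and that the anti-isometry $\langle Lx,Ly\rangle=-\langle x,y\rangle$ converts $A(v,v)-A(Lv,Lv)=0$ into $h(e,e)-h(f,f)=\tr_g h=0$ rather than $h(e,e)+h(f,f)=0$ --- are exactly the ones you isolate and handle correctly.
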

\begin{rmk}
An oriented Lorentzian surface is the synonym of a para-Hermitian surface. Taking the standard para-Hermitian structure on $\rr^{2,2}\cong\uu\cong\dd^2$, we can produce a product para-Hermitian structure on $\Sigma\times\rr^{2,2}$. However, our paracomplex structure $L$ is twisted, that is to say, when restricted to the $\rr^{2,2}$ copy, its paracomplex structure varies according to the points on $\Sigma$.
\end{rmk}

To construct oriented Lorentzian minimal surfaces in $\rr^{1,2}$, one can use the paracomplex version of Weierstrass representation proposed by Konderak \cite{konderak2005}. Following Calabi's strategy in Section 5 of \cite{calabi1958}, we are able to construct non-flat 6-dimensional compact paracomplex manifold diffeomorphic to $T^6$. One may be doubtful about this procedure because that unlike Riemann surfaces, the only compact surface admitting a Lorentzian metric is the torus, so we do not have the paracomplex analogue of hyperelliptic curves in Calabi's original construction. However, there exist abundant para-holomorphic functions on a compact paracomplex manifold, for example on the Lorentzian torus. Therefore we still can produce para-holomorphic 1-forms satisfying right algebraic relations on a Lorentzian torus to make the Weierstrass representation work. Due to the same reason explained in Theorem \ref{thm51} and Remark \ref{rmk52}, the compact paracomplex 3-folds constructed here all have trivial canonical bundle, i.e., they are para-Calabi-Yau 3-folds (see Section 6).

There are not many known examples of compact paracomplex manifolds besides Cartesian products and certain solvmanifolds. A partial reason for this is that paracomplex projective spaces are noncompact. In addition, the paracomplex version of twistor construction, known as reflector spaces \cite{jensen1990}, fails to provide compact examples either. Hopefully the new compact paracomplex 6-manifolds described here may shed light to further understanding of paracomplex geometry.

\subsection{Construction of Manifolds with $G_2$ Structures}

We have already seen that oriented hypersurfaces in manifolds with structure group $G_2$ admits an $SU(3)$ structure. In this subsection, we would like to consider the converse, i.e., what kind of $G_2$ structure we can get from manifolds with structure group $SU(3)$. Here by $G_2$ we mean the compact 14-dimensional simple Lie group. The starting point is the following theorem
\begin{thm}
(\cite{gray1969}, Theorem 2.5)\\
Let $M$ be a 6-manifold with structure group $SU(3)$, i.e., an almost Hermitian manifold with $c_1=0$. If $p:\bar{M}\to M$ is a Riemannian submersion of a 7-manifold $\bar{M}$ onto $M$, then the structure group of $\bar{M}$ can be reduced to $G_2$.
\end{thm}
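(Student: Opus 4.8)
The plan is to globalize the pointwise construction of Section 4.2, taking the horizontal distribution of the submersion as the ``$V$'' and the vertical line as the ``$\rr$'' in the decomposition $W=V\oplus\rr$. Since $p$ is a Riemannian submersion of a $7$-manifold onto a $6$-manifold, at each point $\bar{p}\in\bar{M}$ the tangent space splits orthogonally as $T_{\bar{p}}\bar{M}=\mathcal{H}_{\bar{p}}\oplus\mathcal{V}_{\bar{p}}$, where $\mathcal{V}=\ker(\ud p)$ is a rank-one vertical bundle and $\ud p$ restricts to an isometry $\mathcal{H}_{\bar{p}}\xrightarrow{\sim}T_{p(\bar{p})}M$. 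First I would record the $SU(3)$-structure on $M$ as the pair $(\omega,\Omega=\Omega_1+\sqrt{-1}\,\Omega_2)$, where $\omega$ is the fundamental $2$-form and $\Omega_1\in\clo_6^-$ pointwise, normalized so that $\tfrac14\,\Omega_1\wedge\hat{\Omega}_1=\tfrac16\,\omega^3$ as in Section 4.2.

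Next I would pull these back along $p$. Because $p^*$ of any form annihilates vertical vectors, $p^*\omega$ and $p^*\Omega_1$ are honest global forms on $\bar{M}$ that restrict fibrewise to the given forms on $\mathcal{H}_{\bar{p}}\cong T_{p(\bar{p})}M$. Choosing a unit vertical $1$-form $\beta$ (the metric dual of the positively oriented unit vertical vector field), I would then set $\varphi=p^*\Omega_1-\beta\wedge p^*\omega$. At every point this is exactly the expression $\Omega^--\beta\wedge\omega$ of Section 4.2 applied to $V=\mathcal{H}_{\bar{p}}$, so $\varphi|_{\bar{p}}$ is a stable $3$-form in $\clo_7^-$, compatible with the orthogonal direct-sum metric, which here is precisely the Riemannian submersion metric. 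A stable $3$-form of type $\clo_7^-$ at each point is the same datum as a reduction of the structure group of $\bar{M}$ to the compact $G_2$ (its common stabilizer), which completes the argument.

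Equivalently, and more conceptually, one may argue at the level of frame bundles using the fact that $SU(3)$ is the stabilizer in $G_2\subset SO(7)$ of a unit vector, so that $G_2/SU(3)\cong S^6$. The splitting $T\bar{M}=\mathcal{H}\oplus\mathcal{V}$ together with the pulled-back $SU(3)$-structure on $\mathcal{H}\cong p^*TM$ and a unit vertical vector yields a reduction of the orthonormal frame bundle of $\bar{M}$ to $SU(3)\subset SO(7)$; extending the structure group along the inclusion $SU(3)\hookrightarrow G_2$, which is the easy direction of structure-group change, immediately produces the desired $G_2$-reduction. The point worth stressing is that the hypothesis lets us reduce even further than needed, all the way to $SU(3)$, and $G_2$ then comes for free.

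The main obstacle is not the pointwise algebra, which is handed to us by Section 4.2, but the global existence of the vertical $1$-form $\beta$, i.e.\ the orientability of the vertical line bundle $\mathcal{V}$. Since $\det T\bar{M}\cong p^*(\det TM)\otimes\mathcal{V}$ and $M$ is oriented because its structure group $SU(3)\subset SO(6)$, orientability of $\mathcal{V}$ is equivalent to orientability of $\bar{M}$. I would therefore either assume $\bar{M}$ is oriented and orient the fibres accordingly, or note that the construction patches to a global $G_2$-structure precisely when $\mathcal{V}$ admits a consistent orientation. Once $\beta$ is globally defined the verification that $\varphi$ is smooth and lies in $\clo_7^-$ at every point is routine, so this orientation bookkeeping is the only genuine point requiring care.
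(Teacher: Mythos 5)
Your proposal is correct and follows essentially the same route the paper takes: the paper cites Gray for this theorem rather than reproving it, but immediately afterwards it implements exactly your construction, setting $\varphi=p^*\Omega_1-\rho\wedge p^*\omega$ with $\rho$ the unit vertical $1$-form and invoking the pointwise fact from Section 4.2 that such a form lies in $\clo_7^-$, whose stabilizer is the compact $G_2$. Your remarks on the normalization $\tfrac14\Omega_1\wedge\hat\Omega_1=\tfrac16\omega^3$ and on the orientability of the vertical line bundle are the right points of care (and are automatic in the paper's application, where $\bar{M}$ is a unit circle bundle), so nothing essential is missing.
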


\begin{rmk}
Based on this theorem, the full correspondence between $SU(3)$ structures on $M$ and $G_2$ structures on $\bar{M}$ was first worked out in \cite{chiossi2002}. Some parts of our discussion below is covered by Theorem 5.1 of \cite{chiossi2002}, where more general $SU(3)$-structures are considered.
\end{rmk}

An important class of manifolds with $SU(3)$-structure is balanced 3-folds $M$ with trivial canonical bundle. To be precise, we mean that $M$ is a complex 3-fold admitting a nowhere vanishing holomorphic 3-form. In addition, $M$ is equipped with an Hermitian metric such that its Hermitian form $\omega$ is coclosed, i.e., \[\ud(\omega^2)=2\omega\ud\omega=0.\] Such manifolds play an important role in string theory. In particular, as a generalization of the usual K\"ahler Calabi-Yau's, they are candidates of internal space in heterotic strings. Namely, they are potential solutions to the Strominger system \cite{strominger1986}. Compact examples of such manifolds include K\"ahler Calabi-Yau's, those holomorphic $T^4$-bundle over Riemann surfaces of genus $g\geq3$ in Remark \ref{rmk52}, compact quotients of unimodular 3-dimensional complex Lie groups \cite{abbena1986}, torus bundle over $T^4$ and $K_3$ surface \cite{goldstein2004}, conifold transition of K\"ahler Calabi-Yau's \cite{fu2012} and branched double cover of twistor spaces \cite{lin2014}. However, solutions to the Strominger system are only found on K\"ahler Calabi-Yau's \cite{li2005}, torus bundles over $K_3$ \cite{fu2008} and (quotients of) unimodular complex Lie groups \cite{fei2015}. We leave the discussion of solutions to the Strominger system on $T^4$-bundle over Riemann surfaces of $g\geq3$ to a future paper.

Let $M$ be a balanced complex 3-fold with trivial canonical bundle, and let $(E,h)$ be a holomorphic line bundle over $M$ with an Hermitian metric $h$ and a metric-preserving connection $D$. Consider the unit circle bundle inside $E$, whose total space we shall denote by $\bar{M}$. As $D$ determines the splitting of the tangent space of $\bar{M}$ into the horizontal and vertical parts, it also defines a natural Riemannian metric on $\bar{M}$, making the projection $p:\bar{M}\to M$ to be a Riemannian submersion. Then by above theorem, we get a $G_2$ structure on $\bar{M}$, which will be the subject to study in this subsection.

Let $\xi$ be a local section of the unit circle bundle $p:\bar{M}\to M$ over an open subset $U\subset M$. We can define a fiber coordinate $\theta$ by expressing any point $p\in\bar{M}$ as $\exp(\sqrt{-1}\theta)\xi$. Also $\xi$ defines a connection 1-form $\sqrt{-1}\alpha$ on $M$ by \[D\xi=\sqrt{-1}\alpha\otimes\xi.\] Notice that $\alpha$ is real-valued, and the first Chern class of $E$ is given by \[c_1(E)=-\frac{\ud\alpha}{2\pi}.\] One can easily check that $\rho=\ud\theta+p^*\alpha$ is the globally defined 1-form on $\bar{M}$ such that \[\rho=0\textrm{ on }H\textrm{ and }\rho=\ud\theta\textrm{ on }V,\] where $H$ and $V$ are horizontal and vertical distributions determined by $D$ respectively.

Now let $\Omega=\Omega_1+\sqrt{-1}\Omega_2$ be a holomorphic volume form on $M$ and let $\omega$ be the balanced metric. By our assumption, \[\ud\Omega_1=\ud\Omega_2=\ud(\omega^2)=0.\] Then we can define a 3-form $\varphi$ on $\bar{M}$ by \[\varphi=p^*\Omega_1-\rho\wedge p^*\omega.\] From what we have seen, $\varphi$ lies in the orbit $\clo_7^-$ pointwise and therefore gives rise to a $G_2$ structure on $\bar{M}$. Moreover, calculation shows that \[*\varphi= p^*\Omega_2\wedge\rho-\frac{\omega^2}{2}\] is a stable 4-form on $\bar{M}$. Here $*$ is the Hodge star operator with respect to the Riemannian metric on $\bar{M}$.

We can draw two conclusions from this calculation:
\begin{itemize}
\item $\ud\varphi=0$ if and only if $c_1(E)=0$ as a 2-form and $\ud\omega=0$, which implies that $M$ is actually K\"ahler and $c_1(E)$ is a torsion class as an element in $H^2(M,\zz)$.
\item $\ud(*\varphi)=0$ because that $c_1(E)$ is a (1,1)-form.
\end{itemize}
\begin{rmk}
The $G_2$ structure determined by $\varphi$ also determines a metric on $\bar{M}$, which in general differs from the original metric on $\bar{M}$ determined by $D$. This fact distinguishes the Hodge star $*_\varphi$ defined by $\varphi$ from $*$ we used to compute $*\varphi$. However, when $M$ has constant dilaton, i.e., we may choose a holomorphic volume form $\Omega$ such that \[\frac{\sqrt{-1}}{8}\Omega\wedge\bar{\Omega}=\frac{1}{6}\omega^3,\] then the two metrics coincides. Such manifolds include K\"ahler Calabi-Yau's, our examples in Remark \ref{rmk52}, quotients of unimodular complex Lie groups and torus bundles over $T^4$ or $K_3$. As $*_\varphi\varphi$ plays an important role in the classification of $G_2$ structure by \cite{fernandez1982}, from now on in this section, we will assume that $M$ is a balanced 3-fold with trivial canonical bundle and constant dilaton. For later convenience, we will call $M$ a \emph{special balanced} 3-fold. It is conjectured that on any compact complex manifold with a fixed balanced metric, one can always find a balanced metric (insider the same suitable cohomology class) with prescribed normalized volume form. In particular for any balanced manifold with trivial canonical bundle, it always admits a special balanced structure. Geometrically this is equivalent to say that one can always find a balanced metric such that the holomorphic top form is parallel under the Strominger-Bismut connection. As far as the author knows, this conjecture is still open. For recent progress in this direction, we refer to the work of Sz\'ekelyhidi-Tosatti-Weinkove \cite{szekelyhidi2015}.
\end{rmk}

In conclusion, we have proved
\begin{prop}\label{prop515}(cf. \cite{chiossi2002}, Theorem 5.1)\\
Let $M$ be a special balanced 3-fold, and let $\bar{M}$ be the total space of any principal $U(1)$-bundle over $M$ whose real coefficient first Chern class can be represented by a $(1,1)$-form $F$. It is well-known that any such bundle comes from a holomorphic line bundle over $M$ with an Hermitian metric. We can always choose a $U(1)$-connection $D$ on $\bar{M}$ such that its curvature form is exactly $-2\pi\sqrt{-1}F$. Then by above construction using connection $D$, we cook up a $G_2$-structure $\varphi$ on $\bar{M}$ which is always semi-parallel, i.e., the 3-form $\varphi$ lying in $\clo^-_7$ pointwise satisfies \[\delta\varphi=0,\] where $\delta$ is the codifferential with respect to the metric determined by $\varphi$.

Moreover, if $\varphi$ defines a parallel $G_2$-structure, in other words, $\bar{M}$ has holonomy group contained in $G_2$, then $M$ must be a K\"ahler Calabi-Yau and $F=0$. In such case, the holonomy group of $\bar{M}$ is actually a subgroup of $SU(3)$.
\end{prop}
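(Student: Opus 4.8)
The plan is to carry out the whole argument on the explicit representatives $\varphi=p^*\Omega_1-\rho\wedge p^*\omega$ and $*\varphi=p^*\Omega_2\wedge\rho-\frac{1}{2}p^*\omega^2$ furnished by the construction, using only the special balanced relations $\ud\Omega_1=\ud\Omega_2=\ud(\omega^2)=0$, the connection identity $\ud\rho=p^*\ud\alpha=-2\pi\,p^*F$, and the fact that $F$ is of type $(1,1)$. Since $\delta\varphi=0$ is equivalent to $\ud(*\varphi)=0$, I would compute $\ud(*\varphi)$ first. The constant dilaton hypothesis enters precisely here: it guarantees that the metric induced by $\varphi$ coincides with the submersion metric, so that $*$ agrees with $*_\varphi$ and the statement $\delta\varphi=0$ is unambiguous.

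For the semi-parallel assertion, differentiating gives $\ud(*\varphi)=\ud(p^*\Omega_2)\wedge\rho-p^*\Omega_2\wedge\ud\rho-\frac{1}{2}\ud(p^*\omega^2)$; the first and third terms drop out by the special balanced conditions, leaving $\ud(*\varphi)=2\pi\,p^*(\Omega_2\wedge F)$. Now $\Omega_2=\ima\Omega$ lies in $\wedge^{3,0}\oplus\wedge^{0,3}$ and $F$ is of type $(1,1)$, so $\Omega_2\wedge F$ would have to live in $\wedge^{4,1}\oplus\wedge^{1,4}$, which vanishes on a complex $3$-fold. Hence $\ud(*\varphi)=0$ and $\varphi$ is coclosed.

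For the parallel case I would invoke the Fern\'andez--Gray characterization \cite{fernandez1982}: a $G_2$-structure is parallel if and only if it is both closed and coclosed. As coclosedness is already in hand, being parallel is equivalent to $\ud\varphi=0$. A direct computation yields $\ud\varphi=-\ud\rho\wedge p^*\omega+\rho\wedge p^*\ud\omega=2\pi\,p^*(F\wedge\omega)+\rho\wedge p^*\ud\omega$. The two summands are linearly independent as forms on $\bar M$, one being horizontal and the other carrying the nowhere-vanishing vertical factor $\rho$, so $\ud\varphi=0$ forces $\ud\omega=0$ and $F\wedge\omega=0$ separately. The first says $M$ is K\"ahler, which together with the trivial canonical bundle and the constant dilaton normalization makes $M$ a K\"ahler Calabi-Yau. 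For the second, on a real $6$-manifold the Lefschetz map $L=\omega\wedge(\cdot)\colon\wedge^2\to\wedge^4$ is a pointwise isomorphism (both spaces are $15$-dimensional and $L^{n-k}$ is an isomorphism at the linear-algebra level for $k\le n=3$), so $F\wedge\omega=0$ forces $F=0$.

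It remains to reduce the holonomy. Once $F=0$ the connection $D$ is flat, so $\bar M$ is locally a Riemannian product of the K\"ahler Calabi-Yau $M$ with a flat circle, and the unit vertical field dual to $\rho=\ud\theta$ becomes parallel; a nonzero parallel vector field on a torsion-free $G_2$-manifold reduces the holonomy to the stabilizer of a vector in $G_2$, namely $SU(3)$. I expect the main obstacle to lie in this last step rather than in the two form computations: one must verify that flatness of $D$ genuinely yields a local metric product, so that the vertical field is parallel and not merely Killing, and that the monodromy of the flat circle bundle does not enlarge the restricted holonomy beyond $SU(3)$.
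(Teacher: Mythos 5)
Your proof is correct and follows essentially the same route as the paper, whose ``proof'' of this proposition is precisely the preceding computation of $\ud\varphi$ and $\ud(*\varphi)$ for $\varphi=p^*\Omega_1-\rho\wedge p^*\omega$ together with the two bullet-point conclusions (coclosedness from $F$ being $(1,1)$ so that $\Omega_2\wedge F$ is of type $(4,1)+(1,4)$; closedness forcing $\ud\omega=0$ and $F=0$). You additionally supply two details the paper leaves implicit --- the Lefschetz isomorphism turning $F\wedge\omega=0$ into $F=0$, and the parallel-vector-field argument for the holonomy reduction --- and the obstacle you flag at the end is not one: once $F=0$, the Koszul/O'Neill covariant derivative formulas (computed explicitly later in the paper) give $\bar{\nabla}\pt_\theta=0$, so the globally defined unit vertical field is genuinely parallel and the full holonomy group, not merely the restricted one, fixes it and hence lies in the stabilizer $SU(3)\subset G_2$.
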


Let us assume now that we are in the scenario that $\delta\varphi=0$, or equivalently $*\varphi$ is a closed 4-form. In the sense of calibration geometry \cite{harvey1982}, we know that $*\varphi$ defines a calibration on $\bar{M}$. So we can talk about submanifolds calibrated by $*\varphi$ known as coassociative submanifolds. It is well-known that any such 4-manifold minimizes its volume in its homology class. Let $K$ be a 3-dimensional submanifold of $M$ and let $\bar{K}$ be the total space of the circle bundle $\bar{M}\to M$ restricted to $K$, which is a 4-dimensional submanifold of $\bar{M}$. It is easy to observe the following

\begin{prop}
$\bar{K}$ is a coassociative submanifold of $\bar{M}$ if and only if $K$ is a special Lagrangian submanifold of $M$ in the sense that $K$ is calibrated by $\Omega_2=\re(\Omega)$.
\end{prop}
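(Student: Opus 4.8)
The plan is to restrict the coassociative calibration $*\varphi$ to the four-manifold $\bar K=p^{-1}(K)$ and show that the calibration condition collapses, term by term, to the special Lagrangian condition on $K\subset M$. Throughout I use the formula already computed above,
\[
*\varphi=p^*\Omega_2\wedge\rho-\frac{1}{2}\,p^*(\omega^2),
\]
together with the fact that $*\varphi$ is closed (as $\delta\varphi=0$), so that it is a genuine calibration of comass one, and the orthogonal splitting $T\bar M=H\oplus V$ furnished by the connection $D$, under which $\rho$ vanishes on $H$ and restricts to the unit vertical coframe $\ud\theta$ on $V$.

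First I would record the tangent structure of $\bar K$. Since $K$ is three-dimensional and $\bar K$ is the restriction of the circle bundle to $K$, at each point $T\bar K$ splits orthogonally as $\widetilde{T_xK}\oplus V$, where $\widetilde{T_xK}$ is the horizontal lift of $T_xK$ and $V$ is the one-dimensional vertical line. Because $p$ is a Riemannian submersion and the vertical is of unit length, if $v_1,v_2,v_3$ is an oriented orthonormal basis of $T_xK$ and $\partial_\theta$ spans the unit vertical, then $\tilde v_1,\tilde v_2,\tilde v_3,\partial_\theta$ is an oriented orthonormal frame of $T\bar K$, and the induced volume form factors as $\vol_{\bar K}=p^*(\vol_K)\wedge\rho$.

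Next I would evaluate $*\varphi$ on this frame. The term $\tfrac12 p^*(\omega^2)$ is pulled back from $M$, hence annihilates $\partial_\theta$; since every tangent $4$-plane of $\bar K$ contains the vertical direction, this term restricts to zero. For $p^*\Omega_2\wedge\rho$, the one-form $\rho$ kills the horizontal lifts and satisfies $\rho(\partial_\theta)=1$, while $p^*\Omega_2$ sees only the horizontal part, so that
\[
(*\varphi)\big|_{\bar K}(\tilde v_1,\tilde v_2,\tilde v_3,\partial_\theta)=\Omega_2(v_1,v_2,v_3).
\]
Comparing with $\vol_{\bar K}(\tilde v_1,\tilde v_2,\tilde v_3,\partial_\theta)=1$, the calibration equality $*\varphi\big|_{\bar K}=\vol_{\bar K}$ becomes $\Omega_2(v_1,v_2,v_3)=1$ on oriented orthonormal frames of $T_xK$, that is, $\Omega_2\big|_K=\vol_K$, which is exactly the statement that $K$ is calibrated by $\Omega_2$.

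Finally I would identify ``$K$ calibrated by $\Omega_2$'' with the special Lagrangian condition. Here the special balanced (constant dilaton) normalization is what does the work: it guarantees that $\Omega=\Omega_1+\sqrt{-1}\Omega_2$ is scaled so that $\ima\Omega=\Omega_2$ is a comass-one calibration on $M$, and that a three-plane on which $\Omega_2$ attains the value one is automatically Lagrangian for $\omega$ and calibrated by $\Omega_2$, the defining property of a (phase-rotated) special Lagrangian submanifold; closedness of $\Omega_2$ follows from $\ud\Omega=0$. I expect the main obstacle to be orientation bookkeeping: one must pin down the orientation of $\bar K$ (equivalently the sign of $\rho$) so that the two calibration conditions carry the same sign, and one must invoke the constant-dilaton normalization to promote the pointwise identity ``restriction equals induced volume form'' from a calibration \emph{inequality} to the sharp special Lagrangian \emph{equality}. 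Once the comass-one normalization is secured, the linear-algebra computation above is routine and the equivalence follows in both directions.
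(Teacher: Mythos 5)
Your argument is correct and is precisely the computation the paper has in mind: the paper states this proposition without proof (``It is easy to observe''), immediately after displaying $*\varphi=p^*\Omega_2\wedge\rho-\tfrac{1}{2}\omega^2$, and your term-by-term restriction of that formula to $T\bar{K}=\widetilde{T_xK}\oplus V$ is the evident intended verification. Your attention to the comass/orientation normalization (via the constant-dilaton condition, under which the submersion metric and the $G_2$-metric coincide) is exactly the right bookkeeping to make the equivalence sharp in both directions.
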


The SYZ picture \cite{strominger1996} implies that $M$ admits a special Lagrangian $T^3$ fibration (at least in the case $M$ is K\"ahler). This naturally leads to a coassociative $T^4$ fibration of $\bar{M}$. Such a fibration was predicted by Gukov-Yau-Zaslow in \cite{gukov2003} from $M$-theory, which fits perfectly in the reduction/duality picture.

In Fern\'andez and Gray's classification of Riemannian manifold with $G_2$-structure \cite{fernandez1982}, semi-parallel $G_2$-structure was called class $\mathcal{ST}$ or class $\mathcal{W}_1\oplus\mathcal{W}_3$. Actually what Fern\'andez and Gray did is the following. Let $N$ be a Riemannian manifold with $G_2$-structure $\varphi$, $\nabla$ its associated Levi-Civita connection, one can show that $\nabla\varphi$ always satisfies the symmetry \[(\nabla_x\varphi)(y,z,X(y,z))=0\] for any vector fields $x,y,z$, where $X$ is the 2-fold vector cross product on $N$ determined by $\varphi$. Let $W$ be the space of tensors on $N$ that has exactly the symmetry as $\nabla\varphi$. The natural $G_2$-representation on $W$ splits into 4 irreducible components \[W=W_1\oplus W_2\oplus W_3\oplus W_4.\] Therefore according to what nontrivial components $\nabla\varphi$ has, one can classify $G_2$-structures in 16 classes. For the complete list of these classes, see Table I of \cite{fernandez1982}, here we list several important classes only:
\begin{itemize}
\item Parallel, a.k.a. class $\mathcal{T}$: $\nabla\varphi=0$.\\ Such manifold has holonomy group contained in $G_2$ and must be Ricci-flat. The first example of complete (noncompact) Riemannian manifold with holonomy exactly $G_2$ was constructed by Bryant and Salamon \cite{bryant1989}. First compact example was due to Joyce \cite{joyce1996}.
\item Nearly parallel, a.k.a. class $\mathcal{NT}$ or class $\clw_1$: $\ud\varphi=4\nabla\varphi$ or equivalently $\nabla\varphi=\dfrac{1}{168}\langle\nabla\varphi,*\varphi\rangle*\varphi$.\\ Such manifolds are also known as manifolds with weak holonomy $G_2$ in the sense of Gray \cite{gray1971} through the work of B\"ar \cite{bar1993}. A standard example of manifolds of this type is the round 7-sphere $S^7$ \cite{gray1969}.
\item Almost parallel, a.k.a. class $\mathcal{AT}$ or class $\clw_2$: $\ud\varphi=0$. \\ Such structures were found by Fern\'andez and Gray \cite{fernandez1982} on $TM\times\rr$ for any non-flat Riemannian 3-manifold $M$. As far as the author knows, the first compact example is due to Fern\'andez \cite{fernandez1987}.
\item Class $\clw_3$: $\delta\varphi=\langle\nabla\varphi,*\varphi\rangle=0$.\\ We will discuss more about this class later.
\item Semi-parallel, a.k.a. class $\mathcal{ST}$ or class $\clw_1\oplus\clw_3$: $\delta\varphi=0$.\\ Both compact and noncompact examples are given in Gray \cite{gray1969} where he showed that the natural $G_2$-structure on any oriented hypersurface in $\rr^8$ is semi-parallel.
\end{itemize}

Proposition \ref{prop515} enables us to construct $G_2$-structures in class $\clw_1\oplus\clw_3$. A natural question to ask is if we can obtain $G_2$-structures in class $\clw_1$ or $\clw_3$. From the characterization of these classes, we know that we have to first compute $\nabla\varphi$ before answering this question.

Let us do some local calculation based on our previous notations. Let $\xi$ be a local section of $p:\bar{M}\to M$ over an open subset $U\subset M$. This local section $\xi$ also determines a trivialization $\psi:p^{-1}(U)\cong U\times S^1$ over $U$. Let $\{e_1,e_2,e_3,e_5,e_6,e_7\}$ be an orthonormal frame over $U$ (or a smaller open subset) with dual basis $\{e^1,e^2,e^3,e^5,e^6,e^7\}$. If we denote by $\bar{e}_i$ the horizontal lifts of $e_i$, then it follows directly that $\{\bar{e}_1,\bar{e}_2,\bar{e}_3,\bar{e}_5,\bar{e}_6,\bar{e}_7,\pt_\theta\}$ is an orthonormal frame over $p^{-1}(U)$ and their dual basis is simply $\{p^*e^1,p^*e^2,p^*e^3,p^*e^5,p^*e^6,p^*e^7,\rho\}$. For simplicity of notations, we will omit $p^*$ when there is no ambiguity. We also use the convention that the Levi-Civita connections of $\bar{M}$ and $M$ are denoted by $\bar{\nabla}$ and $\nabla$ respectively.

It is not hard to see that \[\bar{e}_i=e_i-\alpha(e_i)\pt_\theta\] as a vector field on $U\times S^1$ under the above trivialization $\psi$. As a result, we get \[\begin{split}[\bar{e}_i,\bar{e}_j]&=[e_i,e_j]-e_i(\alpha(e_j))\pt_\theta+e_j(\alpha(e_i))\pt_\theta\\ &=\overline{[e_i,e_j]}-\ud\alpha(e_i,e_j)\pt_\theta,\\ [\bar{e}_i,\pt_\theta]&=[\pt_\theta,\pt_\theta]=0.\end{split}\]

As $M$ is special balanced, we can pick up $\{e_1,e_2,e_3,e_5,e_6,e_7\}$ appropriately such that \[\begin{split}\omega&=e^1\wedge e^5+e^2\wedge e^6+e^3\wedge e^7,\\ \Omega_1&=e^1\wedge e^2\wedge e^3-e^1\wedge e^6\wedge e^7+e^2\wedge e^5\wedge e^7-e^3\wedge e^5\wedge e^6,\\ \Omega_2&=e^1\wedge e^2\wedge e^7-e^1\wedge e^3\wedge e^6+e^2\wedge e^3\wedge e^5-e^5\wedge e^6\wedge e^7.\end{split}\]

By our assumption, $\ud\alpha=-2\pi c_1(E)$ is a (1,1)-form, so the only non-vanishing components of $\ud\alpha$ are $\ud\alpha(e_1,e_5)$, $\ud\alpha(e_2,e_6)$ and $\ud\alpha(e_3,e_7)$.

By applying Koszul's formula to compute $\bar{\nabla}$ and using the fact that $p:\bar{M}\to M$ is a Riemannian submersion, we can prove that \[\begin{split}\bar{\nabla}_{\bar{e}_i}\bar{e}_j&=\Gamma_{ij}^k\bar{e}_k-\frac{1}{2}\ud\alpha(e_i,e_j)\pt_\theta,\\ \bar{\nabla}_{\bar{e}_i}\pt_\theta&=\frac{1}{2}\sum_j\ud\alpha(e_i,e_j)\bar{e}_j,\\ \bar{\nabla}_{\pt_\theta}\bar{e}_i&=\frac{1}{2}\sum_j\ud\alpha(e_i,e_j)\bar{e}_j,\\ \bar{\nabla}_{\pt_\theta}\pt_\theta&=0.\end{split}\] It follows that \[\begin{split}\bar{\nabla}_{\bar{e}_i}e^j&=\nabla_{e_i}e^j-\frac{1}{2}\ud\alpha(e_i,e_j)\rho,\\ \bar{\nabla}_{\bar{e}_i}\rho&=\frac{1}{2}\sum_j\ud\alpha(e_i,e_j)e^j,\\ \bar{\nabla}_{\pt_\theta}e^i&=\frac{1}{2}\sum_j\ud\alpha(e_i,e_j)e^j,\\ \bar{\nabla}_{\pt_\theta}\rho&=0.\end{split}\]

As $\varphi=\Omega_1-\rho\wedge\omega$, we get further compute \[\begin{split}\bar{\nabla}_{\pt_\theta}\varphi&=\frac{1}{2}\langle\ud\alpha,\omega\rangle\Omega_2= -\frac{1}{2}\langle\ud\alpha,\omega\rangle\iota_{\pt_\theta}(*\varphi),\\ \bar{\nabla}_{\bar{e}_i}\varphi&=\frac{1}{2}\ud\alpha(e_i,e_{i+3})\iota_{\bar{e}_i}(*\varphi)+\nabla_{e_i}\Omega_1- \rho\wedge\nabla_{e_i}\omega\textrm{ for }i=1,2,3,\\ \bar{\nabla}_{\bar{e}_i}\varphi&=\frac{1}{2}\ud\alpha(e_{i-3},e_i)\iota_{\bar{e}_i}(*\varphi)+\nabla_{e_i}\Omega_1- \rho\wedge\nabla_{e_i}\omega\textrm{ for }i=4,5,6.\end{split}\]

\begin{thm}\label{thm516}(cf. \cite{chiossi2002}, Theorem 5.1)\\
Under the setting of Proposition \ref{prop515}, we get a $G_2$-structure $\varphi$ of class $\clw_1\oplus\clw_3$ on $\bar{M}$. If $\varphi$ actually lies in class $\clw_1$, then $\varphi$ is parallel, so the holonomy group of $\bar{M}$ is contained in $SU(3)$. On the other hand, $\varphi$ lies in class $\clw_3$ if and only if $c_1(E)=-\dfrac{1}{2\pi}\ud\alpha$ is a primitive $(1,1)$-form, i.e., \[\langle\ud\alpha,\omega\rangle=0,\] or equivalently \[\ud\alpha\wedge\omega^2=0.\]
\end{thm}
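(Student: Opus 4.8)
The plan is to extract both parts of the classification directly from the two displayed formulas for $\bar\nabla\varphi$ computed just above, read against the defining conditions of the Fern\'andez--Gray classes recalled in this section. Since Proposition~\ref{prop515} already places $\varphi$ in $\clw_1\oplus\clw_3$ (equivalently $\delta\varphi=0$), the only remaining invariant to pin down is the scalar
\[
\langle\bar\nabla\varphi,*\varphi\rangle=\sum_a\langle\bar\nabla_{E_a}\varphi,\iota_{E_a}(*\varphi)\rangle,\qquad \{E_a\}=\{\bar e_1,\dots,\bar e_6,\pt_\theta\}.
\]
By the class descriptions, $\varphi$ is of class $\clw_3$ precisely when this scalar vanishes, whereas $\varphi$ is nearly parallel (class $\clw_1$) precisely when $\bar\nabla_X\varphi=\lambda\,\iota_X(*\varphi)$ holds with one and the same function $\lambda$ in every direction $X$. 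So the whole argument reduces to computing $\langle\bar\nabla\varphi,*\varphi\rangle$, and, for the $\clw_1$ alternative, comparing the proportionality constants in the vertical and horizontal directions.

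The crucial structural input is that $M$ is special balanced. The conditions $\ud\Omega_1=\ud\Omega_2=0$ together with $\ud(\omega^2)=0$ confine the intrinsic $SU(3)$-torsion to the single Chiossi--Salamon module $\clw_3^{SU(3)}$ of primitive $(2,1)+(1,2)$ type; in particular $\nabla\Omega_1=\nabla\Omega_2=0$, so the terms $\nabla_{e_i}\Omega_1$ in the horizontal formula vanish, while $\nabla\omega$ survives as a section of this irreducible, nontrivial $SU(3)$-module. Expanding $*\varphi=\Omega_2\wedge\rho-\tfrac12\omega^2$ gives $\iota_{\bar e_i}(*\varphi)=(\iota_{e_i}\Omega_2)\wedge\rho-\omega\wedge\iota_{e_i}\omega$, and $|\iota_{E_a}(*\varphi)|^2$ equals one positive constant $c_0$ for every unit $E_a$ by $G_2$-invariance. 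Feeding $\bar\nabla_{\pt_\theta}\varphi=-\tfrac12\langle\ud\alpha,\omega\rangle\,\iota_{\pt_\theta}(*\varphi)$ and the horizontal formulas into the contraction, the six horizontal $\ud\alpha$-terms sum to $c_0\langle\ud\alpha,\omega\rangle$ (each of the three diagonal $(1,1)$-pairs contributing twice), the vertical term contributes $-\tfrac12 c_0\langle\ud\alpha,\omega\rangle$, and the residual torsion terms collapse to $-\sum_i\langle\nabla_{e_i}\omega,\iota_{e_i}\Omega_2\rangle$. This last sum is an $SU(3)$-invariant linear functional of $\nabla\omega$, hence vanishes on the nontrivial irreducible module $\clw_3^{SU(3)}$; therefore $\langle\bar\nabla\varphi,*\varphi\rangle=\tfrac12 c_0\langle\ud\alpha,\omega\rangle$, which is zero if and only if $\langle\ud\alpha,\omega\rangle=0$. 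Since a real $(1,1)$-form $\beta$ on a Hermitian $6$-manifold satisfies $\beta\wedge\omega^2=(\mathrm{const})\,\langle\beta,\omega\rangle\,\omega^3$, this is the same as $\ud\alpha\wedge\omega^2=0$, giving the $\clw_3$ criterion.

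For the remaining implication, suppose $\varphi$ lies in class $\clw_1$, so $\bar\nabla_X\varphi=\lambda\,\iota_X(*\varphi)$ for all $X$. The vertical direction gives $\lambda=-\tfrac12\langle\ud\alpha,\omega\rangle$. In a horizontal direction, after cancelling $\nabla_{e_i}\Omega_1=0$, the identity becomes $-\rho\wedge\nabla_{e_i}\omega=\bigl(\lambda-\tfrac12\ud\alpha(\mathrm{pair}_i)\bigr)\iota_{\bar e_i}(*\varphi)$; its $\rho$-free component is proportional to $\bigl(\lambda-\tfrac12\ud\alpha(\mathrm{pair}_i)\bigr)\,\omega\wedge\iota_{e_i}\omega$, and since $\omega\wedge\iota_{e_i}\omega\neq0$ this forces $\lambda=\tfrac12\ud\alpha(\mathrm{pair}_i)$ for each of the three diagonal pairs. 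Summing, $\langle\ud\alpha,\omega\rangle=6\lambda$, which combined with the vertical relation yields $\lambda=-3\lambda$, hence $\lambda=0$. Then $\ud\alpha$ vanishes (its only components are the three diagonal pairs), the $\rho$-part of the horizontal identity gives $\nabla\omega=0$ so that $M$ is K\"ahler, and $\bar\nabla\varphi=0$. Thus $\varphi$ is parallel, $M$ is a K\"ahler Calabi--Yau, $F=0$, and $\bar M$ is locally a product of $M$ with a flat circle, so its holonomy reduces to $SU(3)$.

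The main obstacle is the vanishing of the cross term $\sum_i\langle\nabla_{e_i}\omega,\iota_{e_i}\Omega_2\rangle$: everything else is bookkeeping with the pre-computed $\bar\nabla\varphi$, but this term genuinely couples the base torsion to the transverse form $\Omega_2$, and the clean way to annihilate it is the representation-theoretic remark that for a special balanced base the torsion sits in the single nontrivial irreducible module $\clw_3^{SU(3)}$, on which no invariant linear functional survives. I would justify the reduction of the intrinsic torsion to $\clw_3^{SU(3)}$ carefully via the Chiossi--Salamon decomposition, as this is exactly the point that decouples the base geometry from the curvature contribution $\langle\ud\alpha,\omega\rangle$ and is where ``special balanced'' (rather than merely balanced) is essential.
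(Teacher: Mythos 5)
Your overall strategy --- read the $\clw_3$ and $\clw_1$ conditions off the pre-computed formulas for $\bar{\nabla}\varphi$ by evaluating $\langle\bar{\nabla}\varphi,*\varphi\rangle$ --- is the same as the paper's, and your bookkeeping of the curvature contributions (horizontal $+\langle\ud\alpha,\omega\rangle$, vertical $-\tfrac12\langle\ud\alpha,\omega\rangle$, total $\tfrac12 c_0\langle\ud\alpha,\omega\rangle$) agrees with the paper's. But the step you yourself single out as crucial contains a genuine error: the claim that special balancedness forces $\nabla\Omega_1=\nabla\Omega_2=0$ for the Levi-Civita connection. This confuses $\ud\Omega=0$ with $\nabla\Omega=0$. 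If $\Omega=\Omega_1+\sqrt{-1}\,\Omega_2$ were Levi-Civita parallel, the holonomy of $M$ would lie in $SL(3,\ccc)\cap SO(6)=SU(3)$ and $M$ would be a K\"ahler Calabi--Yau, contradicting the non-K\"ahler special balanced examples (Calabi's $T^4$-bundles over Riemann surfaces, quotients of unimodular complex Lie groups) that the construction is meant to cover. The correct statement is that the intrinsic $SU(3)$-torsion lies in the Chiossi--Salamon module $\clw_3^{SU(3)}$; but that module acts nontrivially on $\Omega_1$ as well as on $\omega$, so $\nabla_{e_i}\Omega_1\neq0$ in general and the term $\sum_i\langle\nabla_{e_i}\Omega_1,\iota_{e_i}\tfrac{\omega^2}{2}\rangle$ cannot simply be dropped.

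The gap is repairable, and in two ways. The paper's route is purely algebraic: it keeps both residual terms and observes that
\[
\nabla_{e_i}\Omega_1\wedge\omega\wedge e^i+\nabla_{e_i}\omega\wedge\Omega_1\wedge e^i=\bigl(\nabla_{e_i}(\Omega_1\wedge\omega)\bigr)\wedge e^i=0,
\]
since $\Omega_1\wedge\omega=0$ identically by type; only the \emph{sum} of the two terms vanishes, not each one for the reason you give. Alternatively, your representation-theoretic argument (no nonzero $SU(3)$-invariant linear functional on the irreducible nontrivial module $\clw_3^{SU(3)}$) applies equally well to $\sum_i\langle\nabla_{e_i}\Omega_1,\iota_{e_i}\tfrac{\omega^2}{2}\rangle$ and to $\sum_i\langle\nabla_{e_i}\omega,\iota_{e_i}\Omega_2\rangle$, so you could kill both terms that way without ever asserting $\nabla\Omega_1=0$. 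The same defect infects your $\clw_1$ argument: the per-direction identity $\lambda=\tfrac12\ud\alpha(e_i,e_{i+3})$ is obtained by discarding $\nabla_{e_i}\Omega_1$ from the $\rho$-free component, which is not justified direction by direction (only the trace of that term vanishes). Either replace it by the summed version, which still gives $\langle\ud\alpha,\omega\rangle=6\lambda$ together with $\lambda=-\tfrac12\langle\ud\alpha,\omega\rangle=-3\lambda$ and hence $\lambda=0$, or adopt the paper's cleaner argument: use the equivalent characterization $\ud\varphi=4\bar{\nabla}\varphi$ of class $\clw_1$, contract with $\pt_\theta$ to get $\ud\omega=2\langle\ud\alpha,\omega\rangle\Omega_2$, and conclude that both sides vanish from the type mismatch between $(2,1)+(1,2)$ and $(3,0)+(0,3)$.
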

\begin{proof}
Plug in our previous calculation, we see that \[\begin{split}\langle\bar{\nabla}\varphi,*\varphi\rangle&=\langle\bar{\nabla}_{\pt_\theta}\varphi, \iota_{\pt_\theta}(*\varphi)\rangle+\sum_i\langle\bar{\nabla}_{\bar{e}_i}\varphi,\iota_{\bar{e}_i}(*\varphi)\rangle\\ &=\frac{1}{2}\langle\ud\alpha,\omega\rangle\|\iota_{\pt_\theta}(*\varphi)\|^2+ \sum_i\langle\nabla_{e_i}\Omega_1-\rho\wedge\nabla_{e_i}\omega,\iota_{\bar{e}_i}(*\varphi)\rangle.\end{split}\]
Here we have used the facts that \[\|\iota_{\pt_\theta}(*\varphi)\|=\|\iota_{\bar{e}_i}(*\varphi)\|\] and \[\langle\ud\alpha,\omega\rangle=\ud\alpha(e_1,e_5)+\ud\alpha(e_2,e_6)+\ud\alpha(e_3,e_7).\] As $*\varphi=\Omega_2\wedge\rho-\dfrac{1}{2}\omega^2$, we have \[\iota_{\bar{e}_i}(*\varphi)=\rho\wedge\iota_{e_i}\Omega_2-(\iota_{e_i}\frac{\omega^2}{2}),\] we conclude that \[\langle\nabla_{e_i}\Omega_1-\rho\wedge\nabla_{e_i}\omega,\iota_{\bar{e}_i}(*\varphi)\rangle=-\langle\nabla_{e_i}\Omega_1, \iota_{e_i}\frac{\omega^2}{2}\rangle-\langle\iota_{e_i}\Omega_2,\nabla_{e_i}\omega\rangle.\] Multiplying the volume form, we get \[\begin{split}\left(\langle\nabla_{e_i}\Omega_1, \iota_{e_i}\frac{\omega^2}{2}\rangle+\langle\iota_{e_i}\Omega_2,\nabla_{e_i}\omega\rangle\right)\frac{\omega^3}{6}&= \nabla_{e_i}\Omega_1\wedge*(\iota_{e_i}\frac{\omega^2}{2})+\nabla_{e_i}\omega\wedge*(\iota_{e_i}\Omega_2)\\ &=\nabla_{e_i}\Omega_1\wedge\omega\wedge e^i+\nabla_{e_i}\omega\wedge\Omega_1\wedge e^i\\ &=0.\end{split}\] This is because $\omega\wedge\Omega_1=0$ hence $\nabla(\omega\wedge\Omega_1)=0$. Therefore we conclude that \[\langle\nabla\varphi,*\varphi\rangle=\frac{1}{2}\langle\ud\alpha,\omega\rangle\|\iota_{\pt_\theta(*\varphi)}\|^2,\] and the second part of our theorem follows directly.

For the first part, assuming $\varphi$ is of class $\clw_1$, then we know that $\ud\varphi=4\bar{\nabla}\varphi$, hence in particular \[\iota_{\pt_\theta}\ud\varphi=4\bar{\nabla}_{\pt_\theta}\varphi.\] Recall that $\ud\varphi=-\ud\rho\wedge\omega+\rho\wedge\ud\omega$, so from our previous calculation, we get \[\ud\omega=\iota_{\pt_\theta}(-\ud\rho\wedge\omega+\rho\wedge\ud\omega)=4\bar{\nabla}_{\pt_\theta}\varphi= 2\langle\ud\alpha,\omega\rangle\Omega_2.\] Notice that $\ud\omega$ is of type $(2,1)+(1,2)$ while $\Omega_2$ is of type $(3,0)+(0,3)$, this can happen only if $\ud\omega=0$ and $\langle\ud\alpha,\omega\rangle=0$. By our formal calculation, it further implies that $\langle\bar{\nabla}\varphi,*\varphi\rangle=0$, and hence $\nabla\varphi=0$ by Fern\'andez and Gray's characterization of class $\clw_1$. So $\varphi$ is parallel.
\end{proof}
\begin{rmk}
In \cite{cabrera1996}, Cabrera gave a much simpler characterization of all 16 classes of $G_2$-structures which involves differential forms only. In particular, in order to justify that $\bar{M}$ is of class $\clw_3$, we only have to show that $\delta\varphi=0$ and $\ud\varphi\wedge\varphi=0$. This provides a much simpler proof of our theorem. However, here we prefer working out the Levi-Civita connection explicitly for future use.
\end{rmk}

Let us consider the case that $E$ is a holomorphic line bundle over $M$. The condition (as differential form) that \[c_1(E)\wedge\omega^2=0\] implies that $E$ has degree 0 with respect to the polarization $\omega$. As line bundles are stable and balanced metrics are Gauduchon, i.e., $\pt\bpt(\omega^2)=0$, we can apply Li-Yau's non-K\"ahler version of Uhlenbeck-Yau theorem \cite{li1987} to conclude that this condition is equivalent to that the same equation holds as cohomology class. This observation enables us to obtain abundant examples of 7-manifolds with $G_2$-structure of class $\clw_3$. For example, if $M$ has Picard number greater than 1 and $[\omega^2]$ is a rational cohomology class, we can always find $E$ such that $c_1(E)\wedge\omega^2=0$ holds. In particular, we have the following example where $M$ is non-K\"ahler.

\begin{ex}
Let $S$ be a K3 surface of Picard number $\rho$ at least 4 with a Calabi-Yau structure such that its K\"ahler class is rational. Our assumption allows us to find two integral anti self-dual (1,1)-classes $\dfrac{\omega_P}{2\pi}$ and $\dfrac{\omega_Q}{2\pi}$ which are linearly independent. Applying the construction in \cite{goldstein2004}, we get a holomorphic $T^2$-bundle over $S$ whose total space $S^{P,Q}$ has a natural special balanced (non-K\"ahler) structure. As $\omega_P$ and $\omega_Q$ are linearly independent, from a Gysin sequence argument we know that the pull-back map $H^2(S)\to H^2(S^{P,Q})$ is surjective. It indicates that the balanced class on $S^{P,Q}$ is a multiple of an integral class. In addition, the Picard number of $S^{P,Q}$ is at least $\rho-2\geq2$, so we can apply Theorem \ref{thm516} to construct many 7-manifolds with $G_2$-structure of class $\clw_3$ in this way.
\end{ex}

\begin{rmk}
The first non-parallel examples of $G_2$-structures of class $\clw_3$ were found on minimal oriented hypersurfaces in $\rr^8$ by Fern\'andez and Gray \cite{fernandez1982}. Obviously they cannot be compact. The only compact example the author can find in literature is given in \cite{cabrera1996b}, where Cabrera, Monar and Swann proved that there exist $G_2$-structures of class $\clw_3$ on certain Aloff-Wallach manifolds, which are homogeneous spaces of the form $SU(3)/U(1)$. It would also be interesting to relate our construction to geometric quantization and Sasakian geometry.
\end{rmk}

As $p:\bar{M}\to M$ is a Riemannian submersion, we can relate the Riemannian curvature of $\bar{M}$ to the Riemannian curvature on $M$ to get:
\begin{prop}Under the assumption of Proposition \ref{prop515}, we have
\begin{enumerate}
\item The scalar curvature of $\bar{M}$ as a function is the pull-back of the scalar curvature of $M$.
\item $\bar{M}$ is Ricci-flat if and only if $M$ is Ricci-flat and $F=0$. The Ricci-flatness of $M$ implies that $M$ is a K\"ahler Calabi-Yau.
\end{enumerate}
\end{prop}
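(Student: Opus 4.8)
The plan is to exploit the O'Neill submersion formulas, which relate the Ricci and scalar curvatures of the total space $\bar{M}$ of a Riemannian submersion to those of the base $M$ together with the contribution of the fibers and the $O$'Neill $A$-tensor. Here the fibers are one-dimensional circles with totally geodesic flat metric (so the fiber intrinsic curvature vanishes and the second fundamental form of the fibers is trivial), and the $A$-tensor is governed precisely by the curvature $\ud\alpha$ of the connection $D$, since from the bracket computation $[\bar{e}_i,\bar{e}_j]=\overline{[e_i,e_j]}-\ud\alpha(e_i,e_j)\pt_\theta$ we read off $A_{\bar{e}_i}\bar{e}_j=-\tfrac{1}{2}\ud\alpha(e_i,e_j)\pt_\theta$. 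This is exactly the data already extracted in the covariant-derivative formulas for $\bar\nabla$ preceding Theorem \ref{thm516}, so no new connection computation is needed.

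First I would write out the horizontal-horizontal O'Neill formula for the scalar curvature. Because the fiber is one-dimensional and totally geodesic, the terms involving the fiber's intrinsic scalar curvature and its mean curvature drop out, and one is left with $\mathrm{Scal}(\bar{M}) = p^*\mathrm{Scal}(M) - \|A\|^2$, where $\|A\|^2 = \tfrac{1}{4}|\ud\alpha|^2$ is the squared norm of the curvature form. For part (1) as stated, the assertion is that the scalar curvature of $\bar{M}$ is simply the pull-back of that of $M$; this forces the correction term $\|A\|^2$ to be accounted for — so I would verify that under our normalization the $A$-tensor contribution either vanishes or is reinterpreted, and in any case express $\mathrm{Scal}(\bar{M})$ in terms of $\mathrm{Scal}(M)$ and $|\ud\alpha|^2$. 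For part (2), I would assemble the full Ricci tensor via the three O'Neill components (horizontal-horizontal, horizontal-vertical, vertical-vertical): the vertical-vertical Ricci component equals $\|A\|^2 = \tfrac14|\ud\alpha|^2 \geq 0$, which vanishes precisely when $\ud\alpha=0$, i.e. $F=0$. The horizontal-vertical component is controlled by the codifferential $\delta(\ud\alpha)$, and the horizontal-horizontal component recovers $p^*\mathrm{Ric}(M)$ modified by an $A$-quadratic term. Setting all three to zero then yields $F=0$ together with $\mathrm{Ric}(M)=0$.

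Finally, to conclude that Ricci-flatness of $M$ implies $M$ is a Kähler Calabi-Yau, I would invoke the structure already present: $M$ is a special balanced $3$-fold, hence carries an $SU(3)$-structure with coclosed Hermitian form $\omega$. A balanced Hermitian metric whose underlying Riemannian metric is Ricci-flat must have vanishing intrinsic torsion in the relevant components — the balanced condition $\ud(\omega^2)=0$ combined with Ricci-flatness forces $\ud\omega=0$, so the metric is Kähler, and together with the nowhere-vanishing holomorphic volume form this exhibits $M$ as a Kähler Calabi-Yau. The main obstacle I anticipate is the bookkeeping in the mixed O'Neill terms: one must be careful that the $A$-tensor is genuinely skew (purely the curvature, with no symmetric piece, since the fibers are totally geodesic), and that the reconciliation of part (1)'s clean statement with the a priori $-\|A\|^2$ correction is handled correctly — this is where a sign or a normalization slip would most easily creep in, and it is the step deserving the most care.
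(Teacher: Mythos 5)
Your O'Neill--submersion framework for the curvature comparison is exactly the route the paper intends: its entire proof of parts (1) and the first half of (2) is the single remark that they follow from the covariant derivative formulae computed before Theorem \ref{thm516}, which encode precisely the $A$-tensor $A_{\bar{e}_i}\bar{e}_j=-\tfrac{1}{2}\ud\alpha(e_i,e_j)\pt_\theta$ you identify. You are also right to be suspicious of part (1): with totally geodesic flat circle fibers one gets $\mathrm{Scal}(\bar{M})=p^*\mathrm{Scal}(M)-\tfrac{1}{4}\sum_{i,j}\ud\alpha(e_i,e_j)^2$, so the clean statement holds only modulo the $|F|^2$ correction; and your observation that the vertical Ricci component equals $\tfrac{1}{4}\sum_{i,j}\ud\alpha(e_i,e_j)^2\geq 0$ is the right mechanism for extracting $F=0$ in part (2).

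The genuine gap is the final step. You assert that ``the balanced condition $\ud(\omega^2)=0$ combined with Ricci-flatness forces $\ud\omega=0$'' without supplying any mechanism, and this is the one nontrivial claim in the proposition --- it is the only step the paper actually writes out. The paper's argument rests on Formula (4.3) of \cite{liu2014}, which expresses the Riemannian Ricci form as $\Theta^{(1)}-\tfrac{1}{2}\left(\pt\pt^*\omega+\bpt\bpt^*\omega\right)-\tfrac{\sqrt{-1}}{4}T\circ\bar{T}$, with $\Theta^{(1)}$ the first Chern--Ricci curvature and $T$ the Chern torsion. The balanced condition kills the middle term, but killing $\Theta^{(1)}$ requires the \emph{constant dilaton} part of the ``special balanced'' hypothesis (a holomorphic volume form of constant norm gives $\Theta^{(1)}=\sqrt{-1}\pt\bpt\log\|\Omega\|^2=0$), which your sketch never invokes; only then does Ricci-flatness force the trace of $T\circ\bar{T}$ to vanish, and a positivity argument (Lemma 3.5 of \cite{liu2014}) yields $T=0$, i.e.\ $\omega$ is K\"ahler. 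Without some such comparison between the Riemannian Ricci curvature and the Chern data, ``Ricci-flat $+$ balanced $\Rightarrow$ K\"ahler'' is not a routine deduction, so as written your proof of the last assertion does not go through.
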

\begin{proof}
The relation of curvatures of $\bar{M}$ and $M$ can be easily proved by using the covariant derivative formulae we computed. The only thing left is to show that a Ricci-flat special balanced manifold is K\"ahler. This can be proved by using Formula (4.3) of \cite{liu2014}, which says \[Ric_H=\Theta^{(1)}-\frac{1}{2}\left(\pt\pt^*\omega+\bpt\bpt^*\omega\right)-\frac{\sqrt{-1}}{4}T\circ\bar{T},\] where $Ric_H$ is the Ricci form associated with the Riemannian Ricci-tensor, $\Theta^{(1)}$ is known as the first Ricci-Chern curvature, and $T$ is the torsion tensor of the Chern connection. Ricci-flatness says that the left hand side vanishes. Special balanced condition implies that the only non-trivial term on right hand side is the last term. By taking trace of $T\circ\bar{T}$ (Lemma 3.5 of \cite{liu2014}), we conclude that $T$=0, i.e. $\omega$ is K\"ahler.
\end{proof}
\begin{rmk}
As Bonan \cite{bonan1966} showed that $G_2$-manifolds are Ricci-flat, part (b) of the above proposition can be viewed as a stronger version of the second part of Proposition \ref{prop515}.
\end{rmk}

Now let $\bar{M}$ as constructed above be a 7-manifold with $G_2$-structure of type $\clw_3$, i.e., the fundamental 3-form $\varphi$ satisfies \[\delta\varphi=0,\quad\ud\varphi\wedge\varphi=0.\] By Theorem 4.7 of \cite{friedrich2002}, there exists a unique affine connection $\nabla^s$ on $\bar{M}$ with totally skew symmetric torsion $T$ such that \[\nabla^s\varphi=0.\] Moreover, by \cite{friedrich2002}, Theorem 4.8, the torsion 3-form $T$ in this case is actually given by \[T=-*d\varphi,\] which is automatically coclosed. These manifolds with $G_2$-structure of type $\clw_3$ are very important in the sense that they provide constant dilaton solutions to the Killing spinor equations in string theory, see \cite{friedrich2003}, Theorem 1.2.

\section{Nonlinear Hodge Theory}

In this section, we study the geometry associated with the orbit $\clo_6^+$ via nonlinear Hodge theory of Hitchin. Let us first recall Hitchin's functional from \cite{hitchin2000}.

Let $M$ be a compact oriented 6-manifold. Hitchin's functional $\Phi$ is a functional defined on 3-forms on $M$ given by \[\Phi(\Omega)=\int_M\sqrt{|\lambda(\Omega)|},\] where $\lambda(\Omega)\in(\Omega^6(M))^2$ is given in Section 2 and the square root is taken to be consistent with the fixed orientation. It is obvious that $\Phi$ is invariant under orientation preserving diffeomorphisms.

Suppose that $\Omega^-$ is a closed 3-form on $M$ such that it lies in the orbit $\clo_6^-$ pointwise. As $\Omega^-$ defines a de Rham cohomology class $[\Omega^-]\in H^3(M;\rr)$, one can consider the variation problem associated with $\Phi$ within the cohomology class $[\Omega^-]$. This is known as nonlinear Hodge theory in \cite{hitchin2000}, where Hitchin proved the following theorem:

\begin{thm}(\cite{hitchin2000}, Theorem 13)\\
Under the above assumption, $\Omega^-$ is a critical point of $\Phi$ in the cohomology class $[\Omega^-]$ if and only if $\Omega^-$ is the real part of a holomorphic (3,0)-form with respect to the complex structure $J_{\Omega^-}$.
\end{thm}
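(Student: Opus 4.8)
The plan is to compute the first variation of Hitchin's functional $\Phi$ restricted to the affine space $\Omega^- + \ud\,\Omega^2(M)$ of closed 3-forms in the fixed cohomology class $[\Omega^-]$, and to identify the Euler--Lagrange equation with the condition that $\ud\hat{\Omega}^- = 0$. The essential algebraic input is the pointwise derivative of the integrand: since $\Phi(\Omega) = \int_M \sqrt{|\lambda(\Omega)|}$ and $\lambda$ is a homogeneous degree-4 function of $\Omega$ with values in $(\wedge^6 V^*)^2$, I first need an explicit formula for the differential $D\Phi$ at a form in $\clo_6^-$. The key claim, which I would establish by a direct computation using the definition $\lambda(\Omega) = \tfrac{1}{6}\tr(K_\Omega^2)$ and the relation $J_{\Omega^-} = K_{\Omega^-}/\sqrt{-\lambda(\Omega^-)}$, is that for any variation $\dot\Omega \in \wedge^3 V^*$ one has
\[
D\Phi(\dot\Omega) = \int_M \dot\Omega \wedge \hat{\Omega}^-,
\]
where $\hat{\Omega}^- = \ima(\alpha)$ is the companion 3-form introduced in the remark following Theorem 2.2. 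In other words, $\hat{\Omega}^-$ is precisely the $L^2$-gradient of the volume density $\sqrt{|\lambda(\Omega)|}$ with respect to the wedge pairing.

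Granting this gradient formula, the argument is short. A variation of $\Omega^-$ \emph{within} the cohomology class $[\Omega^-]$ has the form $\dot\Omega = \ud\eta$ for some $\eta \in \Omega^2(M)$, since by the de Rham theorem the tangent space to the affine class at $\Omega^-$ is exactly the space of exact 3-forms. Substituting and integrating by parts on the closed manifold $M$ gives
\[
D\Phi(\ud\eta) = \int_M \ud\eta \wedge \hat{\Omega}^- = (-1)^{?}\int_M \eta \wedge \ud\hat{\Omega}^-,
\]
with the sign fixed by the degrees; the boundary term vanishes because $M$ is compact without boundary. Since $\eta$ ranges over \emph{all} smooth 2-forms, this expression vanishes for every $\eta$ if and only if $\ud\hat{\Omega}^- = 0$. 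Combined with the standing hypothesis $\ud\Omega^- = 0$, the two conditions $\ud\Omega^- = \ud\hat{\Omega}^- = 0$ say exactly that $\ud\alpha = \ud(\Omega^- + \sqrt{-1}\,\hat{\Omega}^-) = 0$, i.e. that the decomposable $(3,0)$-form $\alpha$ is holomorphic with respect to $J_{\Omega^-}$. (One uses here that $\ud\alpha = 0$ for a $(3,0)$-form on a 6-manifold automatically forces $J_{\Omega^-}$ to be integrable and $\alpha$ to be a genuine holomorphic volume form, since $\bpt\alpha$ is the only component $\ud\alpha$ can have.)

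The main obstacle is entirely in the first step: proving the gradient identity $D\Phi(\dot\Omega) = \int_M \dot\Omega \wedge \hat{\Omega}^-$. This is a pointwise linear-algebra computation, but it is delicate because one must differentiate the nonlinear map $\Omega \mapsto \sqrt{|\lambda(\Omega)|}$ through the $GL(V)$-invariant quartic $\lambda$ and then recognize the resulting 3-form as the Hitchin dual $\hat{\Omega}^-$ rather than some other contraction of $\Omega^-$. The cleanest route is to exploit $GL(6,\rr)$-equivariance: both $D\Phi$ and the pairing with $\hat{\Omega}^-$ are natural in $\Omega^-$ under the action of the stabilizer and its complement, so by Schur-type reasoning (the stabilizer $SL(3,\ccc)$ acts on $\wedge^3 V^*$ with $\hat{\Omega}^-$ spanning a one-dimensional invariant piece complementary to $\Omega^-$) the derivative must be a scalar multiple of $\hat{\Omega}^-$, and the scalar is pinned down by testing on the rescaling $\dot\Omega = \Omega^-$ using Euler's homogeneity relation $D\Phi(\Omega^-) = \Phi(\Omega^-)$ together with the normalization $\tfrac14 \Omega^- \wedge \hat{\Omega}^- = \sqrt{-\lambda(\Omega^-)}$. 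I would verify the final scalar by evaluating at the canonical form $\Omega^-$ of Theorem 2.1 in a fixed basis, which reduces the whole identity to a finite check.
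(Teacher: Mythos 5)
The paper gives no proof of this statement --- it is quoted from Hitchin with the proof deferred to \cite{hitchin2000} --- so the only benchmark is Hitchin's original argument, and your proposal is exactly that argument: the pointwise gradient formula $D\phi(\dot\Omega)=\dot\Omega\wedge\hat{\Omega}^-$ for $\phi=\sqrt{|\lambda|}$, restriction of variations to exact forms, integration by parts on the closed manifold, and the observation that $\ud\Omega^-=\ud\hat{\Omega}^-=0$ makes $\alpha=\Omega^-+\sqrt{-1}\,\hat{\Omega}^-$ a closed decomposable complex $3$-form, hence a holomorphic volume form. Two points in your sketch need tightening. First, $\phi$ is homogeneous of degree $2$ (since $\lambda$ is quartic), so Euler gives $D\Phi(\Omega^-)=2\Phi(\Omega^-)$, not $\Phi(\Omega^-)$; more importantly, the $SL(3,\ccc)$-invariant subspace of $\wedge^3V^*$ is \emph{two}-dimensional, spanned by $\Omega^-$ and $\hat{\Omega}^-$, so Schur plus Euler only pins down the $\hat{\Omega}^-$-coefficient of the gradient --- testing against $\dot\Omega=\Omega^-$ is blind to an $\Omega^-$-component because $\Omega^-\wedge\Omega^-=0$. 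To kill that component you need one more input, e.g.\ that $\phi$ is invariant under the $\ccc^*$-action $\Omega^-\mapsto\re(e^{-i\theta}\alpha)$ whose infinitesimal generator at $\theta=0$ is $\hat{\Omega}^-$, so $D\phi(\hat{\Omega}^-)=0$; your fallback of a direct check in the canonical basis also closes this. Second, for a general almost complex structure the exterior derivative of a $(3,0)$-form has both a $(3,1)$ and a $(2,2)$ component (the latter measuring the Nijenhuis tensor), so the parenthetical ``$\bpt\alpha$ is the only component'' is not right a priori: $\ud\alpha=0$ is doing double duty, forcing integrability of $J_{\Omega^-}$ via the vanishing $(2,2)$ part and holomorphicity of $\alpha$ via the vanishing $(3,1)$ part. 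With these repairs the argument is complete and coincides with the cited proof.
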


In other words, critical points of $\Phi$ are exactly complex 3-folds with trivial canonical bundle. And from this viewpoint, Hitchin gave a simple proof of unobstructedness of deformation of Calabi-Yau 3-folds.

A natural question to ask is that what are the critical points of $\Phi$ in the orbit $\clo_6^+$. With our paracomplex interpretation of $\clo_6^+$, it is straightforward to prove
\begin{thm}\label{thm62}
Let $M$ be a closed oriented 6-manifold and $\Omega^+\in\Omega^3(M)$ is a closed 3-form on $M$ such that $\Omega^+$ lies in the orbit $\clo_6^+$ pointwise. Then $\Omega^+$ is a critical point of $\Phi$ if and only if $\Omega^+$ can be written as the real part of a nowhere null para-holomorphic (3,0)-form with respect to the paracomplex structure $L_{\Omega^+}$.
\end{thm}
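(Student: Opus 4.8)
The plan is to mirror Hitchin's variational argument for $\clo_6^-$ (Theorem 6.1 above), systematically trading the complex unit $\sqrt{-1}$ for the paracomplex unit $\tau$ with $\tau^2=1$, and the complex structure $J_{\Omega^-}$ for the paracomplex structure $L_{\Omega^+}$ supplied by the structure theory of Section 2. The first thing I would establish is the paracomplex analogue of Hitchin's first-variation formula. On the open orbit $\clo_6^+$ one has $\lambda>0$, so $\sqrt{|\lambda(\Omega^+)|}=\sqrt{\lambda(\Omega^+)}$ is a genuine volume density, and I claim that (up to a universal positive constant) its derivative is computed by the companion form:
\[
D\Phi|_{\Omega^+}(\dot\Omega)=\int_M \dot\Omega\wedge\hat\Omega^+,
\]
where $\hat\Omega^+=\ima(\alpha)$ is the $\tau$-imaginary part of the decomposable para-$(3,0)$-form $\alpha=\Omega^++\tau\hat\Omega^+$ (the paracomplex analogue of $\hat\Omega^-$ from the remark in Section 2). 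This is a pointwise algebraic identity in $\wedge^3V^*$ and can be verified exactly as in the complex case; the one point requiring care is the sign, which is governed by $\tau^2=+1$ (compare the plus sign in $\varphi^+=\Omega^++\beta\wedge\omega$ in Section 4).

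Next I would derive the Euler--Lagrange equation. Since the competitors all lie in the fixed class $[\Omega^+]$ and $\clo_6^+$ is open, admissible variations are exact, $\dot\Omega=d\beta$ with $\beta\in\Omega^2(M)$. Substituting into the first-variation formula and integrating by parts on the closed manifold $M$ gives
\[
D\Phi|_{\Omega^+}(d\beta)=\pm\int_M \beta\wedge d\hat\Omega^+,
\]
so $\Omega^+$ is a critical point if and only if $d\hat\Omega^+=0$. Combined with the standing hypothesis $d\Omega^+=0$, criticality is therefore equivalent to
\[
d\alpha=d\Omega^++\tau\,d\hat\Omega^+=0,
\]
i.e. to the para-$(3,0)$-form $\alpha$ being $d$-closed.

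It remains to convert closedness of $\alpha$ into integrability of $L_{\Omega^+}$ together with para-holomorphicity. For this I would decompose $d$ by $L$-type. On a six-manifold with an almost paracomplex structure, $d$ carries $\Lambda^{p,q}$ into $\Lambda^{p+2,q-1}\oplus\Lambda^{p+1,q}\oplus\Lambda^{p,q+1}\oplus\Lambda^{p-1,q+2}$, the two extreme shifts being the para-Nijenhuis (torsion) parts; applied to $\alpha\in\Lambda^{3,0}$ the top two components vanish for degree reasons, leaving
\[
d\alpha=(d\alpha)^{3,1}+(d\alpha)^{2,2}\in\Lambda^{3,1}\oplus\Lambda^{2,2}.
\]
Because $\Omega^+$ lies in $\clo_6^+$ pointwise, $\alpha$ is nowhere null and hence generates the para-canonical line $\Lambda^{3,0}$, so pairing the para-Nijenhuis tensor against $\alpha$ is injective; thus $(d\alpha)^{2,2}=0$ forces the torsion of $L_{\Omega^+}$ to vanish and $L_{\Omega^+}$ to be integrable. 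With integrability in hand, $(d\alpha)^{3,1}=\bar\partial\alpha$, and its vanishing is precisely the statement that $\alpha$ is para-holomorphic. The converse is immediate by reversing these steps: a nowhere null para-holomorphic $(3,0)$-form is in particular $d$-closed, whence $d\hat\Omega^+=0$ and the first variation vanishes.

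The main obstacle is this last step, the equivalence between vanishing of the $(2,2)$-component of $d\alpha$ and integrability of $L_{\Omega^+}$. In the complex setting the analogous implication rests on the nondegeneracy of $\alpha\wedge\bar\alpha$; in the paracomplex setting the pairing has split signature, so one must check that \emph{nowhere null} ($\Omega^+\wedge\hat\Omega^+\neq0$, equivalently $\lambda(\Omega^+)\neq0$) is genuinely enough to recover the full para-Nijenhuis tensor from $(d\alpha)^{2,2}$, and not merely a null part of it. This is exactly why the hypothesis ``nowhere null'' appears in the statement and cannot be dropped. A secondary technical point is pinning down the correct constant and sign in the first-variation formula, which I would fix by evaluating both sides on the canonical model $\Omega^+=\re((e^1+\tau e^5)(e^2+\tau e^6)(e^3+\tau e^7))$ from the remark in Section 2.
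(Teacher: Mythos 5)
Your proposal is correct and is exactly the intended argument: the paper offers no proof of Theorem \ref{thm62} beyond the remark that it is ``straightforward'' given the paracomplex interpretation of $\clo_6^+$, the point being precisely the $\tau$-for-$\sqrt{-1}$ transcription of Hitchin's proof of Theorem 13 in \cite{hitchin2000} that you carry out (first variation $\int_M\dot\Omega\wedge\hat\Omega^+$, exact variations giving $\ud\hat\Omega^+=0$, hence $\ud\alpha=0$, hence integrability of $L_{\Omega^+}$ and para-holomorphicity of $\alpha$). The one step you flag as delicate is fine: writing $\alpha=\alpha_+e_++\alpha_-e_-$ with $e_\pm=\frac{1}{2}(1\pm\tau)$, the nowhere-null hypothesis says both real decomposable $3$-forms $\alpha_\pm$ are nonvanishing with $\alpha_+\wedge\alpha_-\neq0$, and closedness of each yields Frobenius integrability of the two eigendistributions of $L_{\Omega^+}$, which is equivalent to your Nijenhuis-tensor argument (compare Proposition 6.4 of the paper).
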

\begin{rmk}
There is a vast but ununified literature on paracomplex geometry. For a general knowledge of paracomplex manifolds, we recommend \cite{cruceanu1996}, \cite{etayo2006}, \cite{alekseevsky2009} and \cite{rossi2013}. It is evident that critical points of $\Phi$ in Theorem \ref{thm62} are exactly compact paracomplex 3-folds with trivial canonical bundle, which will be called para-Calabi-Yau's. If in addition, $M$ admits a compatible para-Hermitian metric whose associated Hermitian form is closed, then we will call $M$ a K\"ahler para-Calabi-Yau. It seems that such manifolds were first studied in \cite{harvey2012}, where split special Lagrangian submanifolds of para-Calabi-Yau's were investigated from a calibrated geometric point of view.
\end{rmk}

Now we would like to give more examples of compact para-Calabi-Yau's and discuss their geometric properties and potential relations to string theory. Let us first give an equivalent characterization of para-Calabi-Yau's.

\begin{prop}(cf. \cite{harvey2012}, Proposition 11.2')\\
Let $M$ be a smooth manifold of dimension $2n$. A para-Calabi-Yau structure on $M$ is equivalent to a pair of closed real decomposable $n$-forms $\alpha,\beta$ such that $\alpha\wedge\beta\neq0$ everywhere.
\end{prop}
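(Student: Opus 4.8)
The plan is to build, through the idempotent splitting of the paracomplex numbers $\dd$, a dictionary between the single $\dd$-valued para-holomorphic volume form underlying a para-Calabi-Yau structure and the pair of ordinary real $n$-forms in the statement. Recall that $\dd=\rr[\tau]/(\tau^2-1)$ carries complementary idempotents $e_\pm=\tfrac12(1\pm\tau)$ with $e_+^2=e_+$, $e_-^2=e_-$, $e_+e_-=0$, $e_++e_-=1$, realizing $\dd\cong\rr\oplus\rr$. A para-Calabi-Yau structure on $M^{2n}$ is an integrable paracomplex structure $L$ (so $L^2=\id$ with $\pm1$-eigenbundles $T^\pm$ of rank $n$, both involutive) together with a nowhere-null para-holomorphic $(n,0)$-form $\Theta$. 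First I would write $\Theta=e_+\alpha+e_-\beta$; because $e_+e_-=0$, the fact that $\Theta$ spans the (rank one) canonical bundle forces $\alpha\in\wedge^n(T^+)^*$ and $\beta\in\wedge^n(T^-)^*$ to be real forms, and these are automatically decomposable since a top exterior power of an $n$-dimensional space is always decomposable. Note $\re\Theta=\tfrac12(\alpha+\beta)=\Omega^+$ and $\ima\Theta=\tfrac12(\alpha-\beta)=\hat\Omega^+$, matching the orbit $\clo_6^+$ picture when $n=3$.

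For the forward direction I would argue as follows. Since $\Theta$ has pure type $(n,0)$ and $L$ is integrable, $\pt\Theta\in\wedge^{n+1,0}=0$, so $d\Theta=\bpt\Theta$; hence $\Theta$ being para-holomorphic is equivalent to $d\Theta=0$, and as $d\Theta=e_+\,d\alpha+e_-\,d\beta$ this is equivalent to $d\alpha=d\beta=0$. Non-nullity of $\Theta$ corresponds exactly to $\alpha\wedge\beta\neq0$, because $\alpha\wedge\beta$ is a section of $\wedge^{2n}T^*M$ which is nonzero precisely when neither factor degenerates. This produces the required closed decomposable pair with $\alpha\wedge\beta\neq0$.

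The heart of the converse is an elementary lemma that I would isolate and prove first: if $\gamma=\theta^1\wedge\cdots\wedge\theta^n$ is a closed, pointwise nonzero decomposable $n$-form on $M^{2n}$, then its annihilator $D_\gamma=\{v:\iota_v\gamma=0\}=\bigcap_i\ker\theta^i$ is an involutive rank-$n$ distribution. The proof is two lines: for $X\in D_\gamma$ one has $\iota_X\gamma=0$, so $\mathcal{L}_X\gamma=d\iota_X\gamma+\iota_X d\gamma=0$; then for $X,Y\in D_\gamma$, \[\iota_{[X,Y]}\gamma=\mathcal{L}_X\iota_Y\gamma-\iota_Y\mathcal{L}_X\gamma=0,\] so $[X,Y]\in D_\gamma$. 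Given a pair $(\alpha,\beta)$ as in the statement, $\alpha\wedge\beta\neq0$ means the defining $1$-forms of $\alpha$ and of $\beta$ together constitute a coframe, whence $D_\alpha\oplus D_\beta=TM$ with both summands of rank $n$. I then set $T^+:=D_\beta$, $T^-:=D_\alpha$ and define $L=\pm\id$ on $T^\pm$; the lemma makes both eigendistributions involutive, which is precisely integrability of $L$. Setting $\Theta:=e_+\alpha+e_-\beta$ (now genuinely of type $(n,0)$, since $\alpha$ annihilates $T^-$) gives $d\Theta=\bpt\Theta=e_+\,d\alpha+e_-\,d\beta=0$ and $\alpha\wedge\beta\neq0$, i.e. a nowhere-null para-holomorphic volume form with $\re\Theta=\tfrac12(\alpha+\beta)$. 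A final check that the two assignments are mutually inverse is immediate, since both are governed by the one idempotent splitting $\Theta\leftrightarrow(\alpha,\beta)$ together with the identifications $D_\alpha=T^-$, $D_\beta=T^+$.

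I expect the main difficulty to be bookkeeping rather than depth: keeping the types straight (which eigenbundle each form annihilates, and that the $L$ reconstructed from $(\alpha,\beta)$ coincides with $L_{\Omega^+}$), and confirming that the single scalar condition $\alpha\wedge\beta\neq0$ faithfully encodes both the transversality $T^+\oplus T^-=TM$ and the non-nullity of $\Theta$. The one genuinely substantive input is the lemma, namely that closedness of a decomposable form forces involutivity of its kernel; everything else is linear algebra over $\dd$ and the observation that $d=\bpt$ on $(n,0)$-forms once $L$ is integrable.
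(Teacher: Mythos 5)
Your proposal is correct and is essentially the paper's own argument: the paper disposes of this proposition in one line by invoking the ``null coordinate point of view'' of paracomplex geometry, and your idempotent splitting $\Theta=e_+\alpha+e_-\beta$, the identification of $D_\alpha$, $D_\beta$ with the eigendistributions, and the involutivity lemma for closed decomposable forms are exactly what that phrase abbreviates. If anything you are more careful than the paper, which calls $\alpha,\beta$ the ``real and imaginary parts'' of $\Theta$; the decomposability asserted in the statement holds for the null components you use, not for $\re\Theta=\tfrac12(\alpha+\beta)$ and $\ima\Theta=\tfrac12(\alpha-\beta)$.
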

\begin{proof}
This follows from a null coordinate point view of paracomplex geometry. Here $\alpha$ and $\beta$ are the real and imaginary parts of a nowhere null para-holomorphic $(n,0)$-form on $M$.
\end{proof}

\begin{ex}(cf. \cite{harvey2012}, Note 9.2)\\
Let $M_1$ and $M_2$ be compact manifolds of dimension $n$. Then $M=M_1\times M_2$ has a natural para-Calabi-Yau structure. Moreover, it admits a K\"ahler para-Calabi-Yau structure if and only if both $M_1$ and $M_2$ are parallelizable.
\end{ex}
\begin{ex}(Kodaira-Thurston nilmanifold, see \cite{kodaira1964} and \cite{thurston1976})\\
Let $M=\rr^4/\sim$, where the equivalent relation is given by \[(x_1,x_2,x_3,x_4)\sim(x_1+a,x_2+b,x_3+c,x_4+d-bx_3)\] for any $a,b,c,d\in\zz$. It is easy to see that $M$ is a compact 4-manifold and \[e^1=\ud x_1,\quad e^2=\ud x^2,\quad e^3=\ud x_3,\quad e^4=\ud x_4+x_2\ud x_3\] form a global basis of 1-forms on $M$. Moreover, $\omega=e^1\wedge e^2+e^3\wedge e^4$ is a symplectic form. Let $\alpha=e^1\wedge e^3$ and $\beta=e^2\wedge e^4$. It is clear that $\{\alpha,\beta,\omega\}$ define a K\"ahler para-Calabi-Yau structure on $M$. Similar structures can be constructed on other nilmanifolds and solvmanifolds.
\end{ex}
\begin{ex}
Let us recall the basic setting of SYZ picture and semi-flat mirror symmetry \cite{strominger1996}. Let $N$ be a compact special integral affine manifold of dimension $n$, that is, $N$ admits a collection of coordinate charts such that local coordinate transformations are elements in the group $SL(n,\zz)\ltimes \rr^n$. Let $\{x_1,\dots,x_n\}$ be local coordinates of $N$ and let $\{\xi_1,\dots,\xi_n\}$ be the usual fiber coordinates of $T^*N$. It is obvious that the lattice bundle \[\Lambda^*=\{(x,\xi)\in T^*N:\xi_j\in\zz,~j=1,\dots,n\}\] is well-defined and we may form the quotient $M=T^*N/\Lambda^*$. Clearly, $M$ is a compact $2n$-manifold with a canonical symplectic structure $\omega$ inherited from $T^*N$. Let $\alpha=\ud x_1\wedge\dots\wedge\ud x_n$ and $\beta=\ud\xi_1\wedge\dots\wedge\ud\xi_n$. They are again well-defined because that $N$ is special affine. We see immediately that $\{\alpha,\beta,\omega\}$ define a K\"ahler para-Calabi-Yau structure on $M$.
\end{ex}

Paracomplex structure has made its appearance in string theory in various scenarios. For instance, according to Ro\v cek \cite{rocek1992}, the special geometry involving two commuting complex structures and a paracomplex structure proposed and studied by \cite{gates1984} and \cite{buscher1985} should be treated on an equal footing as usual K\"ahler Calabi-Yau geometry in string compactifications. It was also observed that special para-K\"ahler geometry \cite{cortes2004} arises naturally in Euclidean $\mathcal{N}=2$ supersymmetry among other contexts. However, it seems that the role of para-Calabi-Yau's in this story still awaits to be explored.

In any K\"ahler para-Calabi-Yau space, $\alpha,\beta,\omega$ satisfy the algebraic and differential relations \[\ud\alpha=\ud\beta=\ud\omega=0,~\alpha\wedge\omega=\beta\wedge\omega=0.\] Moreover, Lagrangian foliations, a.k.a polarizations in geometric quantization, arise naturally, since a para-K\"ahler manifold is nothing but a symplectic manifold with two transversal Lagrangian foliations. These two features indicate that para-Calabi-Yau's fit well in the paracomplex analogue of type IIA supergravity with O6-brane in \cite{tseng2014} and type IIA supersymmetry in \cite{lau2014}. It is reasonable to expect that they will play an important role in the future development.

\bibliographystyle{alpha}

\bibliography{C:/Users/Piojo/Dropbox/Documents/Source}

\end{document}